\numberwithin{equation}{section}
\newtheorem{thm}{Theorem}[section]
\newtheorem{lem}[thm]{Lemma}
\newtheorem{remark}[thm]{Remark}
\newtheorem{prop}[thm]{Proposition}
\newtheorem{defn}[thm]{Definition}
\theoremstyle{plain}
\newtheorem{rem}[thm]{Remark}
\newcommand{\eps}{\varepsilon}
\newcommand{\NN}{\mathbb{N}}
\newcommand{\RR}{\mathbb{R}}
\newcommand{\dd}{\partial}
\newcommand{\Ccal}{\mathcal{C}}
\newcommand{\Pcal}{\mathcal{P}}
\newcommand{\abs}[1]{\left\vert#1\right\vert}
\newcommand{\norm}[1]{\left\Vert#1\right\Vert}
\newcommand{\comii}[1]{\left<#1\right>}
\definecolor{myyellow}{RGB}{255,255,190}
\definecolor{myrose}{RGB}{255,218,185}
\definecolor{mygreen}{RGB}{152,251,152}
\newlength\mytemplen
\newsavebox\mytempbox
\newcommand\yellowbox{%
    \@ifnextchar[
       {\@yellowbox}%
       {\@yellowbox[0pt]}}
\def\@yellowbox[#1]{%
    \@ifnextchar[
       {\@@yellowbox[#1]}%
       {\@@yellowbox[#1][0pt]}}
\def\@@yellowbox[#1][#2]#3{
    \sbox\mytempbox{#3}%
    \mytemplen\ht\mytempbox
    \advance\mytemplen #1\relax
    \ht\mytempbox\mytemplen
    \mytemplen\dp\mytempbox
    \advance\mytemplen #2\relax
    \dp\mytempbox\mytemplen
    \colorbox{myyellow}{\hspace{1em}\usebox{\mytempbox}\hspace{1em}}}
\newcommand\greenbox{%
    \@ifnextchar[
       {\@greenbox}%
       {\@greenbox[0pt]}}
\def\@greenbox[#1]{%
    \@ifnextchar[
       {\@@greenbox[#1]}%
       {\@@greenbox[#1][0pt]}}
\def\@@greenbox[#1][#2]#3{
    \sbox\mytempbox{#3}%
    \mytemplen\ht\mytempbox
    \advance\mytemplen #1\relax
    \ht\mytempbox\mytemplen
    \mytemplen\dp\mytempbox
    \advance\mytemplen #2\relax
    \dp\mytempbox\mytemplen
    \colorbox{mygreen}{\hspace{1em}\usebox{\mytempbox}\hspace{1em}}}
\newcommand\rosebox{%
    \@ifnextchar[
       {\@rosebox}%
       {\@rosebox[0pt]}}
\def\@rosebox[#1]{%
    \@ifnextchar[
       {\@@rosebox[#1]}%
       {\@@rosebox[#1][0pt]}}
\def\@@rosebox[#1][#2]#3{
    \sbox\mytempbox{#3}%
    \mytemplen\ht\mytempbox
    \advance\mytemplen #1\relax
    \ht\mytempbox\mytemplen
    \mytemplen\dp\mytempbox
    \advance\mytemplen #2\relax
    \dp\mytempbox\mytemplen
    \colorbox{myrose}{\hspace{1em}\usebox{\mytempbox}\hspace{1em}}}
\begin{document}

\title[Gevrey regularity with weight for  Euler equation ]
{Gevrey regularity with weight for incompressible Euler equation in the half plane}

\author[F. Cheng \and W.-X. Li \and C.-J. Xu]
{Feng Cheng \and Wei-Xi Li \and Chao-Jiang Xu}

\date{}

\address{ Feng Cheng,  School of Mathematics and Statistics, Wuhan University,  430072 Wuhan,  China}
\email{chengfengwhu@whu.edu.cn}

\address{ Wei-Xi Li,  School of Mathematics and Statistics, and Computational Science Hubei Key Laboratory,   Wuhan University,  430072 Wuhan, China}
\email{wei-xi.li@whu.edu.cn}

\address{ Chao-Jiang Xu,
School of Mathematics and Statistics,  Wuhan University,  430072 Wuhan, China}
\email{chjxu.math@whu.edu.cn}

\keywords{Gevrey class, incompressible Euler equation, weight Sobolev space}
\subjclass[2010]{35M33, 35Q31, 76N10}

\begin{abstract}
    In this work we prove the weighted Gevrey regularity of solutions to the   incompressible Euler equation  with initial data decaying polynomially at infinity. This   is  motivated by  the   well-posedness problem of vertical boundary layer equation  for fast rotating fluid.      The method  presented  here is based on the basic weighted $L^2$- estimate,  and the main difficulty arises from  the estimate on the pressure term due to the appearance of  weight function.
\end{abstract}

\maketitle

\section{Introduction}

In this paper we study the Gevrey  propagation of solutions to incompressible Euler equation.  Gevrey  class   is a stronger concept than the $C^\infty$-smoothness. In fact it is an intermediate space between analytic space and $C^\infty$ space.
There have been extensive mathematical investigations (cf.\cite{BoB}, \cite{EM}, \cite{KV2}, \cite{KV3} \cite{Ka}, \cite{Ka2}, \cite{T}, \cite{Foias}, \cite{Y} for instance and the references therein) on Euler equation in different kind of frames,  such as  Sobolev space, analytic space and Gevrey space.    In this work we will consider the problems of Gevrey regularity with weight, and this is motivated by the study of the vertical boundary layer problem.   From a physical point of view, as well as from a mathematical point of view,  when the direction of rotation is  perpendicular  to
the boundaries,  the boundary equation is well developed and called Ekman layers.  Up to now  the Ekman layers (horizontal layer) are  well understood, (c.f.  \cite{cdgg} for instance and the references therein).
 When the direction of rotation is  parallel  to
the boundaries,  the situation is, however, different from the perpendicular case above:   the vertical layers are very different and much more complicated, from a physical, analytical and mathematical point of view, and many open questions in all these directions remain open.  We refer to  \cite{cdgg} for detailed discussion on the vertical layers.   Recently we try to investigate the well-posedness problem for vertical layer,  and the related and preliminary step  is to establish the Gevrey regularty with weight for the outer flow which is described by Euler equation. This is the main result of the present paper.

Without loss of generality, we consider the incompressible Euler equation in half plane $\RR^2_+$, where $\RR^2_+= \{(x,y);x\in\RR, y\in\RR^+ \}$, and our results can be generalized to 3-D Euler equation. The velocity $(u(t,x,y),v(t,x,y))$ and the pressure $p(t,x,y)$ satisfy the following equation:
\begin{align}
        \dd_t u + u\dd_x u+v\dd_y u+ \dd_x p &=0\quad \ \mbox{in}\ (0,\infty)\times\RR^2_+\label{1.1}\\      \dd_t v + u\dd_x v+v\dd_y v+\dd_y p  &=0\quad \ \mbox{in}\ (0,\infty)\times\RR^2_+\label{1.2}\\
        \partial_x u+\partial_y v &=0\quad \ \mbox{in}\ (0,\infty)\times\RR^2_+\label{1.3}
\end{align}
The boundary condition
\begin{equation}\label{1.4}
        v\big|_{y=0} =0 \quad \ \text{in}\ (0,\infty)\times\RR;\quad u,v\to0,\ \text{as}\  \sqrt{x^2+y^2}\to\infty
\end{equation}
With initial data
\begin{equation}\label{1.5}
        u|_{t=0}=u_0, \quad v|_{t=0}=v_0,\quad\ \mbox{in}\ \RR^2_+.
\end{equation}
Here the initial data $(u_0,v_0)$ satisfy the compatibility condition:
\[
   \dd_x u_0+\dd_y v_0=0,\quad v_0|_{y=0}=0,\quad u_0,v_0\to0,\quad\text{as}\quad \sqrt{x^2+y^2}\to\infty.
\]

Before stating our main result we first introduce the (global) weighted Gevrey space.

\begin{defn}\label{defgev}
Let $\ell_x,\ell_y\geq 0$ be real constants that independent of $x,y$, we say that $f\in G^s_{\tau,\ell_x,\ell_y}(\RR^2_+)$ if
\begin{equation*}
      \sup_{\abs\alpha\geq 0}  \frac{\tau^\alpha}{\abs\alpha!^s} \norm{\comii{x}^{\ell_x}\comii{y}^{\ell_y} \dd^\alpha f}_{L^2(\RR^2_+)}<\infty,
\end{equation*}
where and throughout the paper we use the notation  $\comii{\cdot} =(1+|\cdot|^2)^{\frac{1}{2}}$.
\end{defn}

In this  work we present the persistence of weighted Gevrey class regularity of the solution, i.e., we prove that if  the initial datum $(u_0,v_0)$ is in some weighted Gevrey   space and satisfy the compatible condition, then the global  solution belongs to the same space.   With only minor changes, these results can also be extended to 3-D Euler equation, and the global solution here will be replaced by a local solution. 	
Precisely,
\begin{thm}\label{thm1}
Suppose the initial data $u_0\in G^s_{\tau_0,0,\ell_y}, v_0\in G^s_{\tau_0,\ell_x,0}$ for some $s\geq 1,\tau_0>0$ and $0\leq\ell_x,\ell_y\leq 1$.  Then the Euler equation (\ref{1.1})-(\ref{1.5}) admits a solution $u,v,p$ such that
\begin{eqnarray*}
u(t,\cdot)\in L^\infty\left([0,+\infty);~G^s_{\tau(t),0,\ell_y}\right), \quad  v(t,\cdot)\in L^\infty\left([0,+\infty);~G^s_{\tau(t),\ell_x,0}\right)
\end{eqnarray*}
and p satisfies
\[
\dd_x p(t,\cdot)\in L^\infty\left([0,+\infty);\ G^s_{\tau(t),0,\ell_y}\right), \quad \dd_y p(t,\cdot)\in L^\infty\left([0,+\infty);\ G^s_{\tau(t),\ell_x,0}\right)
\]
where $\tau(t)$ is a decreasing function of $t$ with initial value $\tau_0$.
\end{thm}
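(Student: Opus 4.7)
The plan is to perform weighted $L^2$ energy estimates on $\dd^\alpha u$ and $\dd^\alpha v$ for each multi-index $\alpha$, and then to sum these against the Gevrey weights $\tau(t)^{\abs\alpha}/\abs\alpha!^s$, choosing the radius $\tau(t)$ to decrease at precisely the rate needed to absorb the loss of derivative coming from the nonlinearity. Concretely, I would apply $\dd^\alpha$ to \reff{1.1}-\reff{1.2}, test against $\comii{y}^{2\ell_y}\dd^\alpha u$ and $\comii{x}^{2\ell_x}\dd^\alpha v$ respectively, and add. After integration by parts, using $\dd_x u+\dd_y v=0$ and $v|_{y=0}=0$, the time derivative of $\tfrac12(\norm{\comii{y}^{\ell_y}\dd^\alpha u}_{L^2}^2+\norm{\comii{x}^{\ell_x}\dd^\alpha v}_{L^2}^2)$ is balanced by three kinds of contributions: commutators $[\dd^\alpha,\,u\dd_x+v\dd_y]$ coming from the transport terms, the pressure integrals, and lower-order terms produced by the derivatives of the weights.

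The commutator terms I would treat by the standard Gevrey bookkeeping: expand with Leibniz, absorb the binomial coefficients into $\abs\alpha!^s$, and control the lower-order factor in $L^\infty$ via the two-dimensional embedding $H^2\hookrightarrow L^\infty$ applied to the appropriately weighted function. The weight-derivative contributions are kept tame by the constraint $\ell_x,\ell_y\le1$, which ensures $\abs{\dd_y\comii{y}^{\ell_y}}+\abs{\dd_x\comii{x}^{\ell_x}}\lesssim 1$, so they reduce to already-controlled weighted norms of $u$ and $v$.

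The pressure contribution is the main obstacle. Consider the $u$-equation term $\int\comii{y}^{2\ell_y}\dd^\alpha u\,\dd^\alpha\dd_x p\,dxdy$. Because the weight involves only $y$, one can integrate by parts in $x$ to move $\dd_x$ onto $\dd^\alpha u$, then use $\dd_x u=-\dd_y v$ and integrate once more in $y$. The boundary term at $y=0$ vanishes thanks to $v|_{y=0}=0$, the weight-derivative term is handled as above, and what is left pairs $\dd^\alpha p$ with $\dd^\alpha v$ against a mismatched weight system. To close the estimate, I would solve the Poisson problem
\[
-\Delta p=2(\dd_x u\,\dd_y v-\dd_y u\,\dd_x v)\quad\text{in }\RR^2_+,\qquad \dd_y p|_{y=0}=0,
\]
where the Neumann condition comes from evaluating \reff{1.2} at $y=0$ together with $v|_{y=0}=\dd_x v|_{y=0}=0$, and then invoke half-plane weighted elliptic estimates to bound $\norm{\comii{y}^{\ell_y}\dd^\alpha\dd_x p}$ and $\norm{\comii{x}^{\ell_x}\dd^\alpha\dd_y p}$ in terms of weighted $L^2$ norms of the quadratic right-hand side, bounds that are compatible with Gevrey summation. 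The symmetric argument, with the roles of $x,y$ swapped, handles the $v$-equation.

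Finally, summing the per-$\alpha$ estimates against $\tau(t)^{\abs\alpha}/\abs\alpha!^s$ yields a differential inequality for
\[
E(t)=\sum_\alpha\frac{\tau(t)^{\abs\alpha}}{\abs\alpha!^s}\bigl(\norm{\comii{y}^{\ell_y}\dd^\alpha u}_{L^2}+\norm{\comii{x}^{\ell_x}\dd^\alpha v}_{L^2}\bigr)
\]
of the schematic form $\dot E\le \bigl(\dot\tau(t)/\tau(t)+C\,E(t)\bigr)\,E(t)+(\text{lower order})$. Choosing $\dot\tau(t)=-C\tau(t)\,E(t)$ closes the estimate, and since two-dimensional Euler is globally well-posed by Beale--Kato--Majda, $\tau(t)$ stays positive for all $t\ge 0$, giving the stated persistence of weighted Gevrey regularity. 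The principal difficulty, as advertised, is the weighted pressure estimate, which simultaneously destroys the usual cancellation $\int\dd^\alpha u\cdot\dd^\alpha\nabla p\,dxdy=0$ and forces one to reconcile the two asymmetric weight systems carried by $u$ and $v$.
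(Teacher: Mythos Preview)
Your overall framework matches the paper's: energy estimates on $\comii{y}^{\ell_y}\dd^\alpha u$ and $\comii{x}^{\ell_x}\dd^\alpha v$, Leibniz bookkeeping for the transport commutators, a Poisson--Neumann problem for $p$, and absorption of the derivative loss into $\dot\tau$. The commutator and weight-derivative pieces are indeed routine once $\ell_x,\ell_y\le 1$.

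There are two issues in the pressure paragraph, however. First, the integration-by-parts manoeuvre you sketch does not produce a vanishing boundary term in general: after moving $\dd_x$ onto $\dd^\alpha u$ and using $\dd_x u=-\dd_y v$, the $y$-integration by parts leaves the trace $\dd^\alpha v\,\dd^\alpha p\big|_{y=0}$, and $\dd^\alpha v|_{y=0}$ vanishes only when $\alpha_2=0$ (for $\alpha_2\ge 1$ one has $\dd_y v|_{y=0}=-\dd_x u|_{y=0}$, which need not be zero). The paper does not attempt this cancellation; it bounds $\norm{\comii{y}^{\ell_y}\dd_x\dd^\alpha p}_{L^2}$ and $\norm{\comii{x}^{\ell_x}\dd_y\dd^\alpha p}_{L^2}$ directly and pairs them with the velocity via Cauchy--Schwarz.

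Second, and more substantively, ``half-plane weighted elliptic estimates \ldots\ compatible with Gevrey summation'' is precisely the content that has to be supplied, not invoked. A naive $H^{m+1}$ estimate for the Neumann problem will carry an $m$-dependent constant and destroy the Gevrey sum. The paper's mechanism is an explicit algebraic reduction of normal derivatives to tangential ones: from $\Delta g=f$ one gets
\[
\dd_y^{2k+2}g=(-1)^{k+1}\dd_x^{2k+2}g+\sum_{j=0}^{k}(-1)^{k-j}\dd_x^{2k-2j}\dd_y^{2j}f,
\]
so every $\dd^\alpha\nabla p$ with $\abs\alpha=m$ is written as a single purely tangential term $\dd_x^{m-1}\nabla p$ (to which a weighted $H^2$ estimate with a fixed constant applies) plus a sum of at most $m$ derivatives of $h=2(\dd_xu\,\dd_yv-\dd_yu\,\dd_xv)$ of order $m-1$, each with coefficient $\pm 1$. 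This is what makes the elliptic constant universal and the subsequent Gevrey bookkeeping close. Your proposal is missing this reduction, and without it the step ``bounds compatible with Gevrey summation'' does not go through.
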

\begin{rem}
If the initial data $(u_0,v_0)$ were posed both horizontal and vertical weights, namely $u_0\in G^s_{\tau_0,\ell_x,\ell_y},v_0\in G^s_{\tau_0,\ell_x,\ell_y}$,  the result of Theorem \ref{thm1} is also valid.
\end{rem}

We remark that the existence of smooth solutions to (\ref{1.1})-(\ref{1.5}) is well developed (c.f.\cite{BoB}, \cite{EM}, \cite{Ka}, \cite{T}, \cite{Y} for instance), and in two-dimensional case smooth initial data can yield global solutions, while in the three-dimensional case the solution may be local in general condition. The appearance of the weight function increases the difficulty of estimating the pressure term, and for this part it is different from   \cite{KV2}.    We also point out that  in the whole space $\mathbb R^2$ or two dimensional torus $\mathbb T^2,$  the classical  approach  to  analyticity or Gevrey regularity is that it   makes crucial use of   Fourier transformation, which can't apply to our case. Instead we will use the basic $L^2$ estimate (c.f. \cite{clx1, clx2,clx3, clx4,KV2} for instance).

The paper is organized as follows.  In section \ref{sec2}, we introduce the notation used to define the weighted Sobolev norms, and we prove the persistence of the weighted Sobolev regularity. In section \ref{sec3}, we state   the priori estimate to prove the main theorem. Section \ref{sec4} and \ref{sec5} are consist of the proofs of these lemmas.

\section{Notations and Preliminaries}\label{sec2}
In the following context, we use the conventional symbols for the standard Sobolev spaces \({ H}^m(\RR^2_+) \) with \(m\in \mathbb{N}\),  and let $\norm{\cdot}_{H^m}$ be its norm. For the case \(m=0\), it was  usually written as ${ L}^2(\RR^2_+)$. Denote $\left\|\cdot\right\|_{L^2}$ and $\left<\cdot,\cdot\right>$ be the norm and inner product in ${ L}^2(\RR^2_+)$. We usually write a vector function in bold type as ${\bf u}$ and a scalar function in it's conventional way as $u$. For a vector function ${\bf u}=(u,v)$ we denote
  \[
   \left\|{\bf u} \right\|_{H^m}=\sqrt{\norm{u}^2_{H^m}+\norm{v}^2_{H^m}}.
  \]
And when we say that ${\bf u}\in { H}^m$, we mean that $u,v\in{ H}^m$.

With the notations above, we introduce the weighted Sobolev spaces ${ H}^m_{\ell_x}(\RR^2_+)$ and ${ H}^m_{\ell_y}(\RR^2_+)$, where $\ell_x,\ell_y$ are real constants. Let
$$
{H}^m_{\ell_x}(\RR^2_+)=\left\{v\in{ H}^m(\RR^2_+);\quad \comii{x}^{\ell_x}\dd^\alpha v\in L^2, 1\leq\abs{\alpha}\leq m   \right\},
$$
and it's norm is defined by
 \[
  \norm{v}_{H^m_{\ell_x}}=\sqrt{\norm{v}_{L^2}^2+\sum_{1\leq\abs{\alpha}\leq m}\norm{\comii{x}^{\ell_x}\dd^\alpha v}_{L^2}^2 }.
 \]
 Similarly, let
$$
{ H}^m_{\ell_y}(\RR^2_+)=\left\{u\in{ H}^m(\RR^2_+);\quad \comii{y}^{\ell_y}\dd^\alpha u\in L^2, 1\leq\abs{\alpha}\leq m    \right\},
$$
equipped with the norm
 \[
  \norm{u}_{H^m_{\ell_y}}=\sqrt{\norm{u}_{L^2}^2+\sum_{1\leq\abs{\alpha}\leq m}\norm{\comii{y}^{\ell_y}\dd^\alpha u}_{L^2}^2 }.
 \]
We then define  space $H^m_{\ell_x,\ell_y}(\RR^2_+)$ of vector functions by
 \[
   H^m_{\ell_x,\ell_y}(\RR^2_+)=\left\{{\bf u}=(u,v)\in H^m_{\ell_x,\ell_y}(\RR^2_+): u\in H^m_{\ell_y}(\RR^2_+), v\in H^m_{\ell_x}(\RR^2_+)\right\}
 \]
which is equipped with the norm
 \[
  \norm{{\bf u}}_{H^m_{\ell_x,\ell_y}}=\sqrt{\norm{u}_{H^m_{\ell_y}}^2+\norm{v}_{H^m_{\ell_x}}^2 }.
 \]

It's well known that the corresponding Cauchy problem to \eqref{1.1}-\eqref{1.5} is globally well posed in $H^m$ if $m>2$ with dimension $d=2$, see e.g.[\cite{MT}, Chapter 17, Section 2] and
\begin{equation*}
     \norm{\bf u(t)}_{{ H}^m}\leq \norm{{\bf u}_0}_{{ H}^m}\exp\left(C_0\int_0^t \norm{\nabla {\bf u(s)}}_{{ L}^\infty}ds \right)
\end{equation*}
Where $C_0$ is a constant depending on ${\bf u}_0$.

Now we  will show that if the initial data ${\bf u}_0=(u_0,v_0)\in H^m_{\ell_x,\ell_y}$ for $0\leq \ell_x,\ell_y\leq1$, the solution is also in $H^m_{\ell_x,\ell_y}$. This is the first step for the weighted Gevery regularity.

\begin{prop}\label{prop1}
For fixed $m\geq3$ and $0\leq\ell_x,\ell_y\leq1$, let the initial data   ${\bf u}_0\in {H}^m_{\ell_x,\ell_y}(\RR^2_+)$ and  suppose the compatibility condition is fulfilled.
Then the  ${ H}^m$-solution ${\bf u}$ to the Euler equation (\ref{1.1})-(\ref{1.5}) is also in weighted Sobolev space:
$$
{\bf u}(t,\cdot)\in L^\infty\left([0,\infty);\ { H}^m_{\ell_x,\ell_y}(\RR^2_+)\right).
$$
Moreover
\begin{equation}\label{estq}
\begin{aligned}
 \norm{{\bf u}(t,\cdot)}_{H^m_{\ell_x,\ell_y}} \leq \norm{{\bf u}_0}_{H^m_{\ell_x,\ell_y}}\exp &\bigg[C_0 \int_0^t \bigg(\norm{{\bf u}(s,\cdot)}_{{ L}^\infty}+\norm{\comii{y}^{\ell_y}\nabla u(s,\cdot)}_{{ L}^\infty}\\
 &+\norm{\comii{x}^{\ell_x}\nabla v(s,\cdot)}_{{ L}^\infty} \bigg)ds\bigg],
\end{aligned}
\end{equation}
where $C_0$ is a constant depending on m.
\end{prop}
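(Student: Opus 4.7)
The plan is to close an energy estimate on the weighted derivatives of ${\bf u}$ and then apply Gr\"onwall. Since the unweighted $H^m$ bound is classical, it suffices to propagate $\norm{\comii{y}^{\ell_y}\partial^\alpha u}_{L^2}$ and $\norm{\comii{x}^{\ell_x}\partial^\alpha v}_{L^2}$ for each $1\leq|\alpha|\leq m$. For a fixed $\alpha$ I would differentiate \eqref{1.1} by $\partial^\alpha$, pair with $\comii{y}^{2\ell_y}\partial^\alpha u$, and integrate over $\RR^2_+$; the symmetric step is carried out on \eqref{1.2} with weight $\comii{x}^{2\ell_x}$ and test $\partial^\alpha v$.

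The convective term decomposes as $\partial^\alpha(u\partial_x u+v\partial_y u) = (u\partial_x+v\partial_y)\partial^\alpha u + [\partial^\alpha,u]\partial_x u + [\partial^\alpha,v]\partial_y u$. After integration by parts, the transport piece leaves only the weight commutator $\partial_y\comii{y}^{2\ell_y}\cdot v$, and since $|\partial_y\comii{y}^{2\ell_y}|/\comii{y}^{2\ell_y}\leq 2\ell_y$ uniformly in $y$, this is controlled by $\norm{{\bf u}}_{L^\infty}\norm{\comii{y}^{\ell_y}\partial^\alpha u}_{L^2}^2$. The two commutators are then handled by Moser/Kato--Ponce-type inequalities, distributing the weight so that each factor of $u$ absorbs $\comii{y}^{\ell_y}$ and each factor of $v$ absorbs $\comii{x}^{\ell_x}$. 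The mismatched placement of the two weights is exactly what forces both $\norm{\comii{y}^{\ell_y}\nabla u}_{L^\infty}$ and $\norm{\comii{x}^{\ell_x}\nabla v}_{L^\infty}$ to appear on the right of \eqref{estq}.

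The main obstacle will be the pressure contribution $I_p:=\int_{\RR^2_+}\comii{y}^{2\ell_y}\partial^\alpha\partial_x p\cdot \partial^\alpha u$. Since $\comii{y}^{2\ell_y}$ is $x$-independent, I would integrate by parts in $x$ and invoke the incompressibility identity $\partial_x\partial^\alpha u=-\partial_y\partial^\alpha v$ to rewrite $I_p=\int\comii{y}^{2\ell_y}\partial^\alpha p\cdot\partial_y\partial^\alpha v$, and then integrate by parts in $y$. For purely tangential $\alpha$ the boundary contribution at $y=0$ vanishes because $v|_{y=0}=0$; for $\alpha$ involving normal derivatives one must use the equations themselves to reduce the normal traces, the basic identity being $\partial_y p|_{y=0}=0$ obtained by restricting \eqref{1.2} to the boundary after using $v|_{y=0}=0$. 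The surviving bulk commutator pairs with its analogue from the $v$-equation, and after Cauchy--Schwarz is absorbed into $\Rcal(t)\norm{{\bf u}}_{H^m_{\ell_x,\ell_y}}^2$, where $\Rcal(t):=\norm{{\bf u}}_{L^\infty}+\norm{\comii{y}^{\ell_y}\nabla u}_{L^\infty}+\norm{\comii{x}^{\ell_x}\nabla v}_{L^\infty}$.

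Summing over $1\leq|\alpha|\leq m$ and adding the $L^2$ baseline produces
\[
\frac{d}{dt}\norm{{\bf u}(t)}_{H^m_{\ell_x,\ell_y}}^2 \leq C_0\Rcal(t)\norm{{\bf u}(t)}_{H^m_{\ell_x,\ell_y}}^2,
\]
from which \eqref{estq} follows by Gr\"onwall. The most delicate step is the pressure manipulation, and the restriction $0\leq\ell_x,\ell_y\leq 1$ is precisely what allows the weight commutators $\comii{y}^{2\ell_y-1}\leq\comii{y}^{\ell_y}$ (and the symmetric $x$-inequality) to be absorbed without loss, so the scheme closes at each order.
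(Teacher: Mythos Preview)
Your energy setup and the treatment of the convective commutator are correct and essentially match the paper. The gap is in the pressure step. Your integration-by-parts strategy relies on the cancellation that, in the \emph{unweighted} setting, makes $\int \partial^\alpha\nabla p\cdot\partial^\alpha{\bf u}$ vanish via the divergence-free condition. Here, however, the two pressure contributions carry \emph{different} weights: $\comii{y}^{2\ell_y}$ on the $u$-equation and $\comii{x}^{2\ell_x}$ on the $v$-equation. Carrying out your two integrations by parts gives
\[
I_p+J_p=\int_{\RR^2_+}\bigl(\comii{x}^{2\ell_x}-\comii{y}^{2\ell_y}\bigr)\,\partial_y\partial^\alpha p\cdot\partial^\alpha v
-\int_{\RR^2_+}\partial_y\bigl(\comii{y}^{2\ell_y}\bigr)\,\partial^\alpha p\cdot\partial^\alpha v+\text{bdry},
\]
and the first integral still contains $\partial_y\partial^\alpha p$ of order $m+1$ with a coefficient $\comii{x}^{2\ell_x}-\comii{y}^{2\ell_x}$ that is neither small nor sign-definite. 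No Cauchy--Schwarz step places this back inside $\Rcal(t)\norm{{\bf u}}_{H^m_{\ell_x,\ell_y}}^2$; the ``pairing'' you invoke simply does not occur when the two weights differ. (Your treatment of the boundary traces for non-tangential $\alpha$ is also only sketched, but that is secondary.)

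This mismatch is precisely why the paper abandons the cancellation route and instead bounds $\norm{\comii{y}^{\ell_y}\partial_x\partial^\alpha p}_{L^2}$ and $\norm{\comii{x}^{\ell_x}\partial_y\partial^\alpha p}_{L^2}$ \emph{directly}, via weighted elliptic regularity for the Neumann problem $\Delta p=2(\partial_xu\,\partial_yv-\partial_yu\,\partial_xv)$, $\partial_yp|_{y=0}=0$ (Lemmas~\ref{lemma3} and~\ref{lemma4}). Those lemmas multiply the equation by $\comii{y}^{2\ell_y}\partial_{xx}\phi$, etc., and integrate by parts; the weight commutators that appear are controlled because $|\partial_y\comii{y}^{\ell_y}|\leq 1$, which is where the restriction $0\leq\ell_x,\ell_y\leq 1$ is actually used. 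The missing ingredient in your proposal is thus a genuine weighted bound on $\nabla\partial^\alpha p$ that does not rely on cancellation against the velocity.
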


\begin{proof}
It suffices to show (\ref{estq}) holds. When no ambiguity arises, we suppress the time dependence of $u$ and $v$ on $t$.  We begin with proving a priori estimate.    First we have
 \begin{equation}\label{2.2}
{1\over2}\frac{d}{dt} \left(\norm{u(t,\cdot)}_{L^2}^2+\norm{v(t,\cdot)}_{L^2}^2 \right)=0.
\end{equation}
Now let $\alpha\in \mathbb{N}_0^2$ be the multi-index such that $1\leq |\alpha|\leq m$. We apply $\partial^\alpha$ on both sides of \eqref{1.1} and take $L^2-$ inner product with $\comii{y}^{2\ell_y}\dd^\alpha u$
\begin{equation}\label{2.3}
{1\over2}\frac{d}{dt} \norm{\comii{y}^{\ell_y}\dd^\alpha u }_{L^2}^2+ \left< \left<y\right>^{\ell_y}\partial^\alpha({\bf u}\cdot\nabla u),
\left<y\right>^{\ell_y}\partial^\alpha u\right>+\left<\left<y\right>^{\ell_y}\partial^\alpha\partial_x p ,\left<y\right>^{\ell_y}\partial^\alpha u \right>=0.
\end{equation}
And similarly taking $L^2-$ inner product with $\comii{x}^{2\ell_x}\dd^\alpha v$ on both sides of \eqref{1.2},
\begin{equation}\label{2.4}
{1\over2}\frac{d}{dt}\norm{\comii{x}^{\ell_x}\dd^\alpha v}_{L^2}^2+\left<\comii{x}^{\ell_x}\dd^\alpha({\bf u}\cdot\nabla v),\comii{x}^{\ell_x}\dd^\alpha v\right>+
\left<\comii{x}^{\ell_x}\dd^\alpha\dd_y p,\comii{x}^{\ell_x}\dd^\alpha v\right>=0.
\end{equation}
Taking sum over $1\leq\abs{\alpha}\leq m$ in (\ref{2.3}) and (\ref{2.4}), and combining (\ref{2.2}), we have
\begin{equation}\label{2.5}
\begin{aligned}
 &\quad{\frac{1}{2}}\frac{d}{dt}\norm{{\bf u}(t,\cdot)}_{H^m_{\ell_x,\ell_y}}^2\\
 &+\sum_{1\leq\abs{\alpha}\leq m} \left[ \left< \left<y\right>^{\ell_y}\dd^\alpha({\bf u}\cdot\nabla u),
\left<y\right>^{\ell_y}\partial^\alpha u\right> + \left<\comii{x}^{\ell_x}\dd^\alpha({\bf u}\cdot\nabla v),\comii{x}^{\ell_x}\dd^\alpha v\right> \right] \\
 &+\sum_{1\leq\abs{\alpha}\leq m} \left[\left<\left<y\right>^{\ell_y}\partial^\alpha\partial_x p ,\left<y\right>^{\ell_y}\partial^\alpha u \right>
 +\left<\comii{x}^{\ell_x}\dd^\alpha\dd_y p,\comii{x}^{\ell_x}\dd^\alpha v\right>\right]=0.
\end{aligned}
\end{equation}
It remains to estimate ${\rm I}_1$ and ${\rm I}_2$, with $I_j$ defined by
\begin{eqnarray*}
{\rm I}_1&=&\sum_{1\leq\abs{\alpha}\leq m} \left[ \left< \left<y\right>^{\ell_y}\dd^\alpha({\bf u}\cdot\nabla u),
\left<y\right>^{\ell_y}\partial^\alpha u\right> +
 \left<\comii{x}^{\ell_x}\dd^\alpha({\bf u}\cdot\nabla v),\comii{x}^{\ell_x}\dd^\alpha v\right> \right],\\
{\rm I}_2&=&\sum_{1\leq\abs{\alpha}\leq m} \left[\left<\left<y\right>^{\ell_y}\partial^\alpha\partial_x p ,\left<y\right>^{\ell_y}\partial^\alpha u \right>
+\left<\comii{x}^{\ell_x}\dd^\alpha\dd_y p,\comii{x}^{\ell_x}\dd^\alpha v\right>\right].
\end{eqnarray*}

{\bf The estimate on}  {\bm  ${\rm I}_1$}:

Using H\"older inequality and divergence-free condition, we have
\begin{equation*}
\begin{aligned}
\abs{{\rm I}}_1 &\leq \norm{{\bf u}}_{H^m_{\ell_x,\ell_y}}\sum_{1\leq\abs{\alpha}\leq m}\left[\norm{\comii{y}^{\ell_y}\dd^\alpha({\bf u}\cdot\nabla u)-{\bf u}\cdot\nabla(\comii{y}^{\ell_y}\dd^\alpha u)}_{L^2}\right.\\
&\left.\quad+\norm{\comii{x}^{\ell_x}\dd^\alpha({\bf u}\cdot\nabla v)-{\bf u}\cdot\nabla(\comii{x}^{\ell_x}\dd^\alpha v)}_{L^2} \right].
\end{aligned}
\end{equation*}
Note the fact that $\left|\dd^\beta\comii{y}^{\ell_y}\right|,\left|\dd^\beta\comii{x}^{\ell_x}\right|\leq C_m$ for $1\leq\beta\leq\alpha$ and $C_m$ depending on m, then the weight function can be put into the bracket. And with the application of \cite[(3.22),Chapter 13, Section 3]{MT} we have
\begin{equation}\label{2.6}
 \abs{{\rm I}_1}\leq C\left(\norm{\bf u}_{L^\infty}+\norm{\comii{y}^{\ell_y}\nabla u}_{L^\infty}+\norm{\comii{x}^{\ell_x}\nabla v}_{L^\infty} \right)\norm{\bf u}_{H^m_{\ell_x,\ell_y}}^2.
\end{equation}
Where $C$ is a constant depending on $m$. In the following $C$ denotes a generic positive constant depending on $m$.

{\bf The estimate on} {\bm  ${\rm I_2}$:}

In order to estimate ${\rm I}_2$, we need to use Lemma \ref{lemma3} and Lemma \ref{lemma4} in Section \ref{sec5}.    Observe $p$ satisfies the following Neumann problem.
\begin{equation*}
\left\{
\begin{aligned}
 \Delta p     &=2(\dd_x u)\dd_y v-2(\dd_y u)\dd_x v \quad \text{in}\ \RR^2_+\times\{0,\infty \},\\
 \dd_y p|_{y=0}&=0 \quad \mbox{on}\ \RR\times\{0,\infty \}.
\end{aligned}
\right.
\end{equation*}
We proceed to estimate ${\rm I}_2$ through two cases.

(a). If $\abs{\alpha}=1$, then we use Lemma \ref{lemma3} and classical argument of $H^2$-regularity result of the above Neumann problem, to get
  \begin{equation*}
  \begin{aligned}
  &\quad\sum_{\abs{\alpha}=1}\left( \norm{\comii{y}^{\ell_y}\dd_x\dd^\alpha p}_{L^2}+\norm{\comii{x}^{\ell_x}\dd_y\dd^\alpha p}_{L^2}\right)\\
  &\leq C\norm{\comii{y}^{\ell_y}\dd_y u\dd_x v}_{L^2}+C\norm{\comii{y}^{\ell_y}\dd_x u\dd_y v}_{L^2}+C\norm{\comii{x}^{\ell_x}\dd_y u\dd_x v}_{L^2}\\
  &\quad+C\norm{\comii{x}^{\ell_x}\dd_x u\dd_y v}_{L^2}+C\norm{\dd_x p}_{L^2}+C\norm{\dd_y p}_{L^2}\\
  &\leq C\norm{\bf u}_{{\rm H}^m_{\ell_x,\ell_y}}\norm{\nabla{\bf u}}_{L^\infty},
  \end{aligned}
  \end{equation*}
where we used the Hodge decomposition of ${ L}^2(\RR^2_+)$ to estimate $\norm{\nabla p}_{L^2}$ and C is a constant.

(b). If $2\leq\abs{\alpha}=k\leq m$, then we use Lemma \ref{lemma4} and similar arguments as \cite[Proposition 3.6, Chapter 13, Section 3]{MT}; this gives
  \begin{equation*}
  \begin{aligned}
  &\quad\sum_{2\leq\abs{\alpha}\leq m}\bigg( \norm{\comii{y}^{\ell_y}\dd_x\dd^\alpha p}_{L^2}+\norm{\comii{x}^{\ell_x}\dd_y\dd^\alpha p}_{L^2}\bigg)\\
  &\leq C\sum_{k=2}^m\sum_{\abs{\beta}=k-1}\bigg(\norm{\comii{y}^{\ell_y}\dd^\beta(\dd_y u\dd_x v)}_{L^2}+\norm{\comii{y}^{\ell_y}\dd^\beta(\dd_x u\dd_y v)}_{L^2} \\
  &\quad+\norm{\comii{x}^{\ell_x}\dd^\beta(\dd_y u\dd_x v)}_{L^2}
  +\norm{\comii{x}^{\ell_x}\dd^\beta(\dd_x u\dd_y v)}_{L^2}\bigg)\\
  &\quad+C\sum_{k=2}^{m}\bigg(\norm{\dd_x^{k-2}(\dd_y u\dd_x v)}_{L^2}+\norm{\dd_x^{k-2}(\dd_x u\dd_y v)}_{L^2}\bigg)\\
  &\leq C\norm{\bf u}_{{\rm H}^m_{\ell_x,\ell_y}}\bigg(\norm{\comii{y}^{\ell_y}\nabla u}_{L^\infty}+\norm{\comii{x}^{\ell_x}\nabla v}_{L^\infty}\bigg).
  \end{aligned}
  \end{equation*}

Thus we combine the above two cases to conclude that
\begin{equation}\label{2.7}
\abs{{\rm I}_2}\leq C\norm{\bf u}_{{ H}^m_{\ell_x,\ell_y}}^2\left( \norm{\bf u}_{L^\infty}+\norm{\comii{y}^{\ell_y}\nabla u}_{L^\infty}+\norm{\comii{x}^{\ell_x}\nabla v}_{L^\infty}\right),
\end{equation}
where $C$ is a constant depending only on $m$.

And then by (\ref{2.5}), (\ref{2.6}) and (\ref{2.7}), we have
\begin{equation*}
\frac{d}{dt}\norm{{\bf u}(t)}_{{ H}^m_{\ell_x,\ell_y}}\leq C\left(\norm{\bf u}_{L^\infty}+\norm{\comii{y}^{\ell_y}\nabla u}_{L^\infty}+\norm{\comii{x}^{\ell_x}\nabla v}_{L^\infty}\right)\norm{{\bf u}(t)}_{H^m_{\ell_x,\ell_y}}
\end{equation*}
Then with Grownwall inequality we obtain (\ref{estq}).

Now we consider  ${\bf u}\in H^m$.   Repeating the above arguments with $\comii y^{\ell_y}$   and $\comii x^{\ell_x}$ replaced,  respectively, by
\begin{eqnarray*}
	\frac{\comii y^{\ell_y} }{\comii {\eps y}^{\ell_y}}, \quad \frac{\comii x^{\ell_x} }{\comii {\eps x}^{\ell_x}}
\end{eqnarray*}
where $0<\eps<1$, then we can also deduce \eqref{estq} by letting $\eps\rightarrow 0$.  We then complete the proof of the proposition.
\end{proof}

\section{Weighted Gevrey regularity}\label{sec3}

We inherit the notations that used in \cite{KV2} for $X_\tau$ and $Y_\tau$. That is to say for a multi-index $\alpha=(\alpha_1,\alpha_2)$ in $\NN^2$, and a vector function ${\bf u}=(u,v)$,  define the Sobolev and semi-norms as follows:
   \[
    \abs{\bf u}_{m,\ell_x,\ell_y}=\sum_{\abs{\alpha}=m}\left(\norm{\comii{y}^{\ell_y}\dd^\alpha {u}}_{L^2}+\norm{\comii{x}^{\ell_x}\dd^\alpha v}_{L^2}\right),
   \]

   \[
    \abs{\bf u}_{m,\ell_x,\ell_y,\infty}=\sum_{\abs{\alpha}=m}\left(\norm{\comii{y}^{\ell_y}\dd^\alpha {u}}_{L^\infty}+\norm{\comii{x}^{\ell_x}\dd^\alpha v}_{L^\infty}\right),
   \]
   where $\abs{\bf u}_m=\abs{\bf u}_{m,0,0}$ and $\abs{\bf u}_{m,\infty}=\abs{\bf u}_{m,0,0,\infty}.$

For $s\geq1$ and $\tau>0$, define a new weighted Gevrey spaces, which is equivalent to that in Definition  \ref{defgev},  by
   \[
    X_{\tau,\ell_x,\ell_y}=\left\{{\bf u}\in C^\infty :\norm{\bf u}_{X_{\tau,\ell_x,\ell_y}}<\infty \right\},
   \]
where
   \[
    \norm{\bf u}_{X_{\tau,\ell_x,\ell_y}}=\sum_{m=3}^\infty \abs{\bf u}_{m,\ell_x,\ell_y}\frac{\tau^{m-3}}{(m-3)!^s}.
   \]
And let
   \[
    Y_{\tau,\ell_x,\ell_y}=\left\{{\bf u}\in C^\infty :\norm{\bf u}_{Y_{\tau,\ell_x,\ell_y}}<\infty \right\},
   \]
where
   \[
    \norm{\bf u}_{Y_{\tau,\ell_x,\ell_y}}=\sum_{m=3}^\infty \abs{\bf u}_{m,\ell_x,\ell_y}\frac{(m-3)\tau^{m-4}}{(m-3)!^s}.
   \]
We will denote  $X_{\tau}=X_{\tau,0,0}$ and $Y_\tau=Y_{\tau,0,0}$.

In order to show the main result, Theorem \ref{thm1}, it suffices to show the following
\begin{thm}\label{thsec}
Let   the initial data ${{\bf u}_0}=(u_0,v_0)$ satisfy
 \[{\bf u}_0\in X_{\tau_0,\ell_x,\ell_y}\] for some $s\geq 1,\tau_0>0$ and $0\leq\ell_x,\ell_y\leq 1$. Then the Euler system (\ref{1.1})-(\ref{1.5}) admits a solution
	\begin{eqnarray*}
		{\bf u}(t,\cdot)\in L^\infty\left([0,\infty);~X_{\tau(t),\ell_x,\ell_y}\right),
	\end{eqnarray*}
     where $\tau(t)$ is a decreasing function depending on the initial radius $\tau_0$ and the $H^m_{\ell_x,\ell_y}-$ solution ${\bf u}$ for $m\geq 6$.
\end{thm}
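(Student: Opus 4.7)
The plan is to lift the weighted Sobolev estimate of Proposition~\ref{prop1} to a Gevrey-radius differential inequality for $\norm{{\bf u}(t)}_{X_{\tau(t),\ell_x,\ell_y}}$, and then choose the radius $\tau(t)$ so that its shrinkage rate exactly balances the loss-of-one-derivative produced by the nonlinearity and the pressure. I would first fix $m\geq 3$, $|\alpha|=m$, and repeat the derivation of (\ref{2.3})--(\ref{2.4}) but with a careful tracking of the full Leibniz expansion of the commutator $[\dd^\alpha,{\bf u}\cdot\nabla]$ in weighted $L^2$. Using H\"older and the monotonicity $\comii{y}^{\ell_y}\leq C(\comii{y_1}^{\ell_y}+\comii{y_2}^{\ell_y})$ valid for $0\leq\ell_y\leq 1$ (and symmetrically in $x$), the weight can be assigned to whichever factor carries the highest derivative count, so that the transport contribution to $\tfrac{d}{dt}\abs{\bf u}_{m,\ell_x,\ell_y}$ is schematically dominated by
\[
\sum_{j=0}^{m}\binom{m}{j}\abs{\bf u}_{j+1,\ell_x,\ell_y,\infty}\abs{\bf u}_{m-j+1,\ell_x,\ell_y}+\text{(low-order remainders)},
\]
together with analogous $L^\infty$-$L^2$ pairings in which the $L^\infty$ factor involves at most, say, two derivatives and is swallowed by Sobolev embedding into $H^m_{\ell_x,\ell_y}$.

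For the pressure contribution at level $|\alpha|=m$, I would invoke Lemmas~\ref{lemma3}--\ref{lemma4} from Section~\ref{sec5} to reduce $\norm{\comii{y}^{\ell_y}\dd^\alpha\dd_x p}_{L^2}$ and $\norm{\comii{x}^{\ell_x}\dd^\alpha\dd_y p}_{L^2}$ to weighted $L^2$-norms of $\dd^{\alpha-1}\bigl((\dd u)(\dd v)\bigr)$, after which the Leibniz expansion produces the same type of convolution sum as the transport term. Multiplying the resulting $m$-level bound by $\tau^{m-3}/(m-3)!^s$ and summing over $m\geq 3$, I would invoke the standard Gevrey combinatorial estimate
\[
\sum_{j=0}^{m-3}\binom{m}{j+3}\frac{(j!)^s(m-3-j)!^s}{(m-3)!^s}\leq C\,(m-3)^{2s},\qquad s\geq 1,
\]
which collapses the double sum into a product of two $X_{\tau,\ell_x,\ell_y}$-type norms, with exactly one extra factor of $(m-3)$ that is precisely what defines $Y_{\tau,\ell_x,\ell_y}$. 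The upshot is an inequality of the form
\[
\frac{d}{dt}\norm{{\bf u}}_{X_{\tau,\ell_x,\ell_y}}\leq \dot\tau(t)\,\norm{{\bf u}}_{Y_{\tau,\ell_x,\ell_y}}+C\norm{{\bf u}}_{X_{\tau,\ell_x,\ell_y}}\norm{{\bf u}}_{Y_{\tau,\ell_x,\ell_y}}+C(t)\norm{{\bf u}}_{X_{\tau,\ell_x,\ell_y}},
\]
where $C(t)$ absorbs the finitely many Sobolev contributions from $m\leq 5$ controlled by Proposition~\ref{prop1}.

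To conclude, I would pick $\tau(t)$ as the (decreasing) solution of $\dot\tau(t)+C\norm{{\bf u}(t)}_{X_{\tau(t),\ell_x,\ell_y}}=0$ with $\tau(0)=\tau_0$, so that the two $Y_{\tau,\ell_x,\ell_y}$-terms cancel. The remaining linear inequality, handled by Gronwall, gives $\norm{{\bf u}(t)}_{X_{\tau(t),\ell_x,\ell_y}}\leq \exp\!\bigl(\int_0^tC(s)\,ds\bigr)\norm{{\bf u}_0}_{X_{\tau_0,\ell_x,\ell_y}}$; a standard continuity argument then shows that $\tau(t)>0$ for every $t\geq 0$. The hard part, flagged already in the introduction, is the weighted pressure estimate: the weights $\comii{x}^{\ell_x}$ and $\comii{y}^{\ell_y}$ do not commute with the Neumann solution operator for $-\Delta$, so Lemmas~\ref{lemma3}--\ref{lemma4} really carry the technical weight of the argument; once they are in hand, the Gevrey summation is close in spirit to the unweighted argument of \cite{KV2}, with the weights tracked throughout as in the proof of Proposition~\ref{prop1}.
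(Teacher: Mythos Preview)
Your overall strategy---differentiate $\norm{{\bf u}}_{X_{\tau(t),\ell_x,\ell_y}}$, expand the commutator and pressure by Leibniz, feed the pressure through Lemmas~\ref{lemma3}--\ref{lemma4}, then choose $\tau(t)$ to absorb the $Y$-norm---is the paper's. But there is a genuine gap in the endgame that would cost you the global-in-time conclusion.

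In your schematic inequality the coefficient of $\norm{{\bf u}}_{Y_{\tau,\ell_x,\ell_y}}$ must carry at least one positive power of $\tau$. In the paper (Lemmas~\ref{commutator} and~\ref{pressure}, then \eqref{3.3}) every term multiplying $\norm{{\bf u}}_{Y}$ is of the form $\tau\abs{{\bf u}}_{1,\ell_x,\ell_y,\infty}$, $(\tau^2+\tau^4)\norm{{\bf u}}_{H^m_{\ell_x,\ell_y}}$, or $(\tau^2+\tau^3)\norm{{\bf u}}_{X_{\tau,\ell_x,\ell_y}}$. These factors of $\tau$ are structural, not cosmetic: the $X$-weight $\tau^{m-3}/(m-3)!^s$ and the $Y$-weight $(m-3)\tau^{m-4}/(m-3)!^s$ differ by exactly one power of $\tau$, so passing the top-order factor from $X$ to $Y$ always leaves a $\tau$ behind (and the middle-range $j$'s, handled via the Agmon-type splitting in \eqref{4.2}--\eqref{4.3}, produce $\tau^2$ or higher). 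Your bare $C\norm{{\bf u}}_{X}\norm{{\bf u}}_{Y}$ loses this.

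The consequence is fatal for global time: your ODE $\dot\tau+C\norm{{\bf u}}_{X_{\tau}}=0$ forces $\tau$ to decrease at a rate bounded \emph{below} by $C\abs{{\bf u}(t)}_{3,\ell_x,\ell_y}>0$, so $\tau$ reaches zero in finite time and no continuity argument can prevent it. With the correct $\tau$-powers the condition becomes \eqref{3.4}, i.e.\ $\dot\tau+C\tau(\cdots)+C\tau^2(\cdots)\leq 0$, a Bernoulli-type inequality whose explicit solution (displayed just after \eqref{3.5}) stays strictly positive for all $t\geq 0$. Relatedly, after the $Y$-terms are absorbed the paper is left not with $C(t)\norm{{\bf u}}_{X}$ but with $C(1+\tau_0^3)\norm{{\bf u}}_{H^m_{\ell_x,\ell_y}}^2$, which is controlled by Proposition~\ref{prop1} and simply integrated to give \eqref{priori}; this bound $M(t)$ then closes the $\tau$-ODE.
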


We will prove Theorem 3.1 using the method of \cite{KV2}, with main difference from the estimate on pressure.  By Proposition \ref{prop1} we see $\norm{\bf u}_{H^m_{\ell_x,\ell_y}}<+\infty$ for each $m>2$. When no ambiguity arises, we suppress the time dependence of $\tau$ and $u,v$ on $t$.  With notations above we have
    \begin{equation}\label{3.1}
      \frac{d}{dt}\norm{{\bf u}(t,\cdot)}_{X_{\tau(t),\ell_x,\ell_y}} =\tau^\prime(t)\norm{{\bf u}(t,\cdot)}_{Y_{\tau(t),\ell_x,\ell_y}}+\sum_{m=3}^\infty \frac{d}{dt}|{\bf u}(t,\cdot)|_{m,\ell_x,\ell_y} \frac{\tau(t)^{m-3}}{(m-3)!^s}.
    \end{equation}
Recalling from (\ref{2.4}) and (\ref{2.5}) and using H\"older inequality,  we obtain
\begin{equation*}
\begin{aligned}
 \frac{d}{dt}\abs{{\bf u}(t,\cdot)}_{m,\ell_x,\ell_y} &\leq \sum_{\abs{\alpha}=m}\sum_{\beta\leq\alpha,\beta\neq0}{\alpha\choose\beta}
 \bigg(\norm{\comii{y}^{\ell_y}\dd^\beta{\bf u}\cdot\nabla\dd^{\alpha-\beta}u}_{L^2} \\
 &\quad+\norm{\comii{x}^{\ell_x}\dd^\beta{\bf u}\cdot\nabla\dd^{\alpha-\beta}v}_{L^2} \bigg)
 +\sum_{\abs{\alpha}=m}\bigg(\norm{\comii{y}^{\ell_y}\dd_x\dd^\alpha p}_{L^2} \\
 &\quad+\norm{\comii{x}^{\ell_x}\dd_y\dd^\alpha p}_{L^2} \bigg)+\norm{\bf u}_{L^\infty}\abs{\bf u}_m.
\end{aligned}
\end{equation*}
Set
\begin{equation*}
\begin{aligned}
 \Ccal&=\sum_{m=3}^\infty \sum_{\abs{\alpha}=m} \sum_{\beta\leq\alpha,\beta\neq0}{\alpha\choose\beta}\bigg(\norm{\comii{y}^{\ell_y}\dd^\beta{\bf u}\cdot\nabla\dd^{\alpha-\beta}u}_{L^2}\\
 &\quad+\norm{\comii{x}^{\ell_x}\dd^\beta{\bf u}\cdot\nabla\dd^{\alpha-\beta}v}_{L^2} \bigg)\frac{\tau(t)^{m-3}}{(m-3)!^s}
\end{aligned}
\end{equation*}
and
\[
 \Pcal=\sum_{m=3}^\infty\sum_{\abs{\alpha}=m} \left(\norm{\comii{y}^{\ell_y}\dd_x\dd^\alpha p}_{L^2}+\norm{\comii{x}^{\ell_x}\dd_y\dd^\alpha p}_{L^2} \right)\frac{\tau(t)^{m-3}}{(m-3)!^s}.
\]
Combined with (\ref{3.1}), we have
    \begin{equation}\label{3.2}
      \frac{d}{dt}\norm{{\bf u}(t,\cdot)}_{X_{\tau(t),\ell_x,\ell_y}}\leq \tau^\prime(t)\norm{{\bf u}(t,\cdot)}_{Y_{\tau(t),\ell_x,\ell_y}}+\mathcal{C}+\mathcal{P}+C\tau(t)\norm{{\bf u}(t,\cdot)}_{L^\infty}\norm{{\bf u}(t,\cdot)}_{Y_\tau}.
    \end{equation}

We give the following Lemma to estimate $\Ccal$, the proof is postponed to Section \ref{sec4}.

    \begin{lem}\label{commutator}
    There exists a sufficiently large constant $C>0$ such that
      \[
       \Ccal\leq C\left(\Ccal_1+\Ccal_2\norm{\bf u}_{Y_{\tau,\ell_x,\ell_y}} \right),
      \]
    where
      \begin{equation*}
      \begin{aligned}
       \Ccal_1 &=\abs{\bf u}_{1,\ell_x,\ell_y,\infty}\abs{\bf u}_{3,\ell_x,\ell_y}+\abs{\bf u}_{2,\infty}\abs{\bf u}_{2,\ell_x,\ell_y}+\tau \abs{\bf u}_{2,\ell_x,\ell_y,\infty}\abs{u}_{3,\ell_x,\ell_y}\\
       &\quad+\tau^2\abs{\bf u}_3\abs{\bf u}_{3,\ell_x,\ell_y,\infty}
      \end{aligned}
      \end{equation*}
    and
      \[
       \Ccal_2=\tau\abs{\bf u}_{1,\ell_x,\ell_y,\infty}+\tau^2\abs{\bf u}_{2,\ell_x,\ell_y,\infty}+\tau^2\norm{\bf u}_{X_{\tau,\ell_x,\ell_y}}+\tau^3\abs{\bf u}_{3,\ell_x,\ell_y,\infty}.
      \]
    \end{lem}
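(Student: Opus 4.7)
My plan is to split the inner sum over $\beta$ in $\Ccal$ according to the size of $|\beta|$. For each $\alpha$ with $|\alpha|=m\geq 3$, I distinguish the small range $|\beta|\in\{1,2,3\}$, the symmetric large range $|\beta|\in\{m-2,m-1,m\}$, and (when $m\geq 7$) the intermediate range $4\leq |\beta|\leq m-3$. Within each summand I apply H\"older's inequality, routing one factor to a weighted $L^\infty$ semi-norm and the other to a weighted $L^2$ semi-norm. Two preliminary observations make the weight-handling painless: first, since $\ell_x,\ell_y\in[0,1]$ and $\comii{x},\comii{y}\geq 1$, the unweighted $L^\infty$ norm of $\dd^\gamma u$ is dominated by $\norm{\comii{y}^{\ell_y}\dd^\gamma u}_{L^\infty}$ (and similarly for $v$); second, the weight functions commute with $\dd^\gamma$ up to bounded lower-order terms, since $|\dd^\gamma\comii{x}^{\ell_x}|+|\dd^\gamma\comii{y}^{\ell_y}|\leq C$ for $|\gamma|\geq 1$. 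Consequently the weight in $\Ccal$ may freely be transferred between the two factors of the product at the cost of a manifestly controlled additive error.

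For the small regime $|\beta|=j\in\{1,2,3\}$, I place $\dd^\beta\mathbf{u}$ in the weighted $L^\infty$ semi-norm $\abs{\mathbf{u}}_{j,\ell_x,\ell_y,\infty}$ and $\nabla\dd^{\alpha-\beta}\mathbf{u}$ (of order $m-j+1$) in the weighted $L^2$ semi-norm; using $\binom{\alpha}{\beta}\leq C m^j$ and multiplying by $\tau^{m-3}/(m-3)!^s$, the finitely many boundary values $m\in\{j+2,j+3\}$ yield the explicit terms of $\Ccal_1$, while the tail $m\geq j+4$ telescopes after an index shift into $\tau^j\abs{\mathbf{u}}_{j,\ell_x,\ell_y,\infty}\norm{\mathbf{u}}_{Y_{\tau,\ell_x,\ell_y}}$, producing the corresponding pieces of $\Ccal_2\norm{\mathbf{u}}_{Y_{\tau,\ell_x,\ell_y}}$. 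The large regime $|\beta|\in\{m-2,m-1,m\}$ is handled by the mirror argument: now the derivative of order $m-|\beta|+1\leq 3$ on the right factor goes into the weighted $L^\infty$ semi-norm, and the derivative of order $|\beta|$ on $\mathbf{u}$ goes into the weighted $L^2$ semi-norm, producing the remaining terms of $\Ccal_1$ such as $\tau\abs{\mathbf{u}}_{2,\ell_x,\ell_y,\infty}\abs{u}_{3,\ell_x,\ell_y}$ and $\tau^2\abs{\mathbf{u}}_3\abs{\mathbf{u}}_{3,\ell_x,\ell_y,\infty}$.

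The main obstacle is the intermediate range $4\leq |\beta|\leq m-3$, where neither factor has small order. Here I use a weighted $L^2\times L^\infty$ H\"older pairing, converting the lower-order factor to $L^\infty$ via a one-derivative Sobolev embedding (absorbed into the adjacent semi-norm), and invoke the Gevrey convolution inequality
\[
\binom{m}{j}\frac{1}{(m-3)!^s}\leq \frac{C\,(m-j-3)}{(j-3)!^s\,(m-j-3)!^s},\qquad 4\leq j\leq m-4,\ s\geq 1,
\]
whose analytic case $s=1$ reduces to the elementary estimate $m(m-1)(m-2)\leq C\,j(j-1)(j-2)(m-j)(m-j-1)(m-j-2)(m-j-3)$ in this range. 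Summing the resulting double series over $m$ and $j$ yields the convolution bound $\tau^2\norm{\mathbf{u}}_{X_{\tau,\ell_x,\ell_y}}\norm{\mathbf{u}}_{Y_{\tau,\ell_x,\ell_y}}$, which accounts for the $\tau^2\norm{\mathbf{u}}_{X_{\tau,\ell_x,\ell_y}}$ contribution to $\Ccal_2$. Verifying this combinatorial identity uniformly for $s\geq 1$, and carefully tracking which factor carries the weight at each stage, are the delicate points; both follow the template of \cite{KV2}, with the only genuine new input being the weight transfer argument sketched in the first paragraph.
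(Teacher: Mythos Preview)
Your overall strategy---splitting $\sum_{|\beta|=j}$ into low, intermediate, and high ranges and handling the weights by the observation that $|\partial^\gamma\langle y\rangle^{\ell_y}|\leq C$---is exactly what the paper does (following \cite{KV2}). The endpoint ranges are fine, modulo a minor shift: the paper treats $j\in\{1,2\}$ and $j\in\{m-2,m-1,m\}$ as the edge cases and puts $j=3$ into the intermediate block, whereas you put $j=3$ with the low range. This is cosmetic.

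There is, however, a genuine gap in your treatment of the intermediate range $4\leq j\leq m-3$. You propose to put the lower-order factor into $L^\infty$ ``via a one-derivative Sobolev embedding.'' In the two-dimensional setting of this paper, $H^1(\RR^2_+)\not\hookrightarrow L^\infty$, so a one-derivative embedding is unavailable; you need either the crude embedding $H^2\hookrightarrow L^\infty$ (costing two derivatives and hence destroying your stated combinatorial inequality) or, as the paper actually uses, the Agmon-type interpolation inequality
\[
\norm{\comii{y}^{\ell_y}\dd^\beta{\bf u}\cdot\nabla\dd^{\alpha-\beta}u}_{L^2}\leq C\norm{\dd^\beta{\bf u}}_{L^2}\norm{\comii{y}^{\ell_y}\nabla\dd^{\alpha-\beta}u}_{L^2}^{1/2}\norm{D^2\bigl(\comii{y}^{\ell_y}\nabla\dd^{\alpha-\beta}u\bigr)}_{L^2}^{1/2},
\]
taken from \cite[Proposition~3.8, Chapter~13]{MT}. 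This is precisely the paper's inequality~(4.2). The square-root structure forces a split of the intermediate range at $[m/2]$ (so that one always knows which factor receives the $L^2$--$H^2$ interpolation) and produces the mixed powers $\tau^{5/2}$ and $\tau^3$ in the paper's estimate~(4.1), not the clean $\tau^2$ you claim. Your Gevrey convolution inequality, as written with the factor $(m-j-3)$ alone, is calibrated to a one-derivative loss and does not match the half-derivative bookkeeping that Agmon's inequality actually requires; the correct combinatorics involve products like $\abs{\bf u}_j\,\abs{\bf u}_{m-j+1,\ell_x,\ell_y}^{1/2}\abs{\bf u}_{m-j+3,\ell_x,\ell_y}^{1/2}$, followed by Cauchy--Schwarz in the $j$-sum. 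Once you replace the nonexistent one-derivative embedding by Agmon's inequality and redo the factorial estimate accordingly, the argument goes through exactly as in \cite{KV2}.
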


The following lemmas shall be used to estimate $\Pcal$. The proof is postponed to Section \ref{sec5} below.

    \begin{lem}\label{pressure}
      There exists a sufficiently large constant $C>0$ such that
      $$\Pcal\leq C\left(\Pcal_{1}+\Pcal_{2}\norm{{\bf u}}_{Y_{\tau,\ell_x,\ell_y}}\right) $$
      where
      \begin{equation*}
      \begin{aligned}
      \Pcal_{1} &=\abs{\bf u}_{1,\ell_x,\ell_y,\infty}\abs{\bf u}_{3,\ell_x,\ell_y}+\abs{\bf u}_{2,\ell_x,\ell_y,\infty}\abs{\bf u}_{2,\ell_x,\ell_y}+\abs{\bf u}_{1,\ell_x,\ell_y,\infty}\abs{\bf u}_{2,\ell_x,\ell_y}\\
      &\quad+\tau\left(\abs{\bf u}_{2,\ell_x,\ell_y,\infty}\abs{\bf u}_{3,\ell_x,\ell_y}+\abs{\bf u}_{1,\ell_x,\ell_y,\infty}\abs{\bf u}_{3,\ell_x,\ell_y}+\abs{\bf u}_{2,\ell_x,\ell_y,\infty}\abs{u}_{2,\ell_x,\ell_y}\right) \\
                    &\quad+\tau^2\abs{\bf u}_{2,\ell_x,\ell_y,\infty}\abs{\bf u}_{3,\ell_x,\ell_y}+\tau^3\abs{\bf u}_{3,\ell_x,\ell_y,\infty}\abs{\bf u}_{3,\ell_x,\ell_y}
      \end{aligned}
      \end{equation*}
      and
     \begin{equation*}
     \begin{aligned}
     \Pcal_{2} &=\tau\abs{\bf u}_{1,\ell_x,\ell_y,\infty}+\tau^2\left(\abs{\bf u}_{2,\ell_x,\ell_y,\infty}+\abs{\bf u}_{1,\ell_x,\ell_y,\infty}\right)+\tau^3\left(\abs{\bf u}_{3,\ell_x,\ell_y,\infty} \right.\\
     &\left.\quad+\abs{\bf u}_{2,\ell_x,\ell_y,\infty}\right)
     +\left(\tau^2+\tau^{5/2}+\tau^3 \right)\norm{\bf u}_{X_{\tau,\ell_x,\ell_y}}+\tau^4\abs{\bf u}_{3,\ell_x,\ell_y,\infty}
     \end{aligned}
     \end{equation*}
    \end{lem}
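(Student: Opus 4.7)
The plan is to reduce derivatives of the pressure $p$ (which solves the Neumann problem $\Delta p=2(\dd_x u)(\dd_y v)-2(\dd_y u)(\dd_x v)$ with $\dd_y p|_{y=0}=0$) to derivatives of its source via the weighted elliptic regularity result of Lemma \ref{lemma4}, then to expand by Leibniz and carry out the Gevrey summation, treating the low-order part and the bulk convolution separately.

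Concretely, for each multi-index $\alpha$ with $|\alpha|=m\geq 3$, Lemma \ref{lemma4} yields
\begin{equation*}
\norm{\comii{y}^{\ell_y}\dd_x\dd^\alpha p}_{L^2}+\norm{\comii{x}^{\ell_x}\dd_y\dd^\alpha p}_{L^2}
\leq C\sum_{|\beta|=m-1}\sum_{W,\,(i,j)}\norm{W\,\dd^\beta(\dd_i u\,\dd_j v)}_{L^2}+R_\alpha,
\end{equation*}
with $W\in\set{\comii{y}^{\ell_y},\comii{x}^{\ell_x}}$, $(i,j)\in\set{(y,x),(x,y)}$, and $R_\alpha$ a lower-order tail. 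I would always attach a $\comii{y}^{\ell_y}$ weight to the $u$-factor and a $\comii{x}^{\ell_x}$ weight to the $v$-factor after Leibniz expansion, so that the emerging norms agree with the semi-norms $\abs{\bf u}_{k,\ell_x,\ell_y}$ and $\abs{\bf u}_{k,\ell_x,\ell_y,\infty}$ defining $\Pcal_1,\Pcal_2$; this uses $\ell_x,\ell_y\leq 1$ and the natural pairing of weights in $X_{\tau,\ell_x,\ell_y}$. Writing $\dd^\beta(\dd_i u\,\dd_j v)=\sum_{\gamma\leq\beta}\binom{\beta}{\gamma}(\dd^\gamma\dd_i u)(\dd^{\beta-\gamma}\dd_j v)$ I split the inner sum into three regimes: (i) $|\gamma|\leq 1$, placing the $u$-factor in weighted $L^\infty$ and $v$ in weighted $L^2$; (ii) $|\beta-\gamma|\leq 2$, symmetrically with $v$ in $L^\infty$; (iii) the bulk $2\leq|\gamma|\leq|\beta|-3$. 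In regimes (i)--(ii), after Hölder the outer sum in $m$ collapses, following an index shift of one to four derivatives, to a low-order $L^\infty$ semi-norm times $\norm{\bf u}_{Y_{\tau,\ell_x,\ell_y}}$, producing the $\tau,\tau^2,\tau^3,\tau^4$ pieces of $\Pcal_2$; the boundary terms $m=3,4,5$ that cannot be absorbed into a $Y$-norm yield the $\Pcal_1$ pieces. In regime (iii) the sum is a discrete Gevrey convolution and, using the factorial bound $\binom{\beta}{\gamma}/(m-3)!^s\lesssim 1/[(|\gamma|-a)!^s(|\beta-\gamma|-b)!^s]$ valid for $s\geq 1$ up to a polynomial-in-$m$ factor absorbable by shifts $a,b\leq 3$, together with a Cauchy--Schwarz split on $|\gamma|$, one extracts one factor of $\norm{\bf u}_{X_{\tau,\ell_x,\ell_y}}$ and one of $\norm{\bf u}_{Y_{\tau,\ell_x,\ell_y}}$, yielding the $(\tau^2+\tau^{5/2}+\tau^3)\norm{\bf u}_{X_{\tau,\ell_x,\ell_y}}$ contribution to $\Pcal_2$ (the $\tau^{5/2}$ arising from the Cauchy--Schwarz exponent, exactly as in \cite{KV2}). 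Finally, the tail $R_\alpha$ is summed directly and dumps into the low-order part of $\Pcal_1$.

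The main obstacle is the bulk regime (iii): matching the Leibniz binomial $\binom{\beta}{\gamma}$ against the double Gevrey factorial $(|\gamma|-a)!^s(|\beta-\gamma|-b)!^s$ so that after Cauchy--Schwarz one obtains exactly an $X$-norm on one factor and a $Y$-norm on the other, rather than two $X$-norms (which would not close the ODE in $\tau$ later). A secondary technical obstacle is the asymmetry of the weights in the elliptic estimate (the weight $\comii{y}^{\ell_y}$ is naturally paired with $\dd_x\dd^\alpha p$ while $\comii{x}^{\ell_x}$ is paired with $\dd_y\dd^\alpha p$), which forces the weight-attachment rule above and is the reason that mixed $L^\infty$-semi-norms in both variables simultaneously appear in $\Pcal_1$ and $\Pcal_2$. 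The Neumann boundary condition itself is already absorbed into the statement of Lemma \ref{lemma4}.
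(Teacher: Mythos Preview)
Your overall strategy---reduce via Lemma~\ref{lemma4} to weighted $L^2$ norms of $\dd^\beta h$ with $|\beta|=m-1$, Leibniz-expand, and split the resulting double sum into low/bulk/high regimes in $|\gamma|$---is exactly what the paper does, including the appeal to \cite{KV2} for the bulk convolution that produces the $(\tau^2+\tau^{5/2}+\tau^3)\norm{\bf u}_{X_{\tau,\ell_x,\ell_y}}$ piece of $\Pcal_2$. The paper's split points differ cosmetically (it isolates $j=0,1,2$ and $j=m-2,m-1$, and halves the bulk at $[m/2]$), and your weight-attachment rule is the same as the paper's (though it relies on $\comii{\cdot}^{\ell}\geq 1$, not on $\ell\leq 1$).

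The one genuine gap is your last sentence on the remainder $R_\alpha$. From Lemma~\ref{lemma4} this tail is $C\norm{\dd_x^{m-2}h}_{L^2}$; after summing over $|\alpha|=m$ and then over $m\geq 3$ with the Gevrey weights you obtain
\[
\Pcal_x \;=\; C\sum_{m=3}^\infty \frac{m\,\tau^{m-3}}{(m-3)!^s}\,\norm{\dd_x^{m-2}h}_{L^2},
\]
which is an infinite series, not a low-order object, and cannot ``dump into $\Pcal_1$'' (a finite sum of products of fixed semi-norms). In the paper $\Pcal_x$ receives its own Leibniz expansion and its own low/bulk/high case analysis, parallel to that for the main weighted piece; this is in fact the source of the $\tau^4\abs{\bf u}_{3,\ell_x,\ell_y,\infty}$ term in $\Pcal_2$ and of several of the $\tau,\tau^2,\tau^3$ contributions to $\Pcal_1$. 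The fix is routine---run your regimes (i)--(iii) again on $\Pcal_x$, which carries one fewer derivative and no weight, so each estimate is strictly easier---but it cannot be skipped.
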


Let $m\geq6$ be fixed.  With Sobolev embedding theorem and the lemmas above and (\ref{3.2}), we have
    \begin{equation}\label{3.3}
     \begin{aligned}
     \frac{d}{dt}\norm{{\bf u}(t,\cdot)}_{X_{\tau(t),\ell_x,\tau_y}} &\leq {\tau}^\prime(t) \norm{{\bf u}(t,\cdot)}_{Y_{\tau(t),\ell_x,\ell_y}}+C(1+\tau(t)^3) \norm{{\bf u}(t,\cdot)}_{H^m_{\ell_x,\ell_y}}^2\\
                                                      &\quad+C \tau(t)\left(\norm{\bf u}_{L^\infty} +\abs{\bf u}_{1,\ell_x,\ell_y,\infty}\right) \norm{\bf u}_{Y_{\tau,\ell_x,\ell_y}}  \\
                                                     &\quad+ C (\tau(t)^2+\tau(t)^4)\norm{\bf u}_{{ H}^m_{\ell_x,\ell_y}}\norm{\bf u}_{Y_{\tau,\ell_x,\ell_y}} \\
                                                     &\quad+                 C        (\tau(t)^2+\tau(t)^3)\norm{\bf u}_{X_{\tau(t),\ell_x,\ell_y}} \norm{\bf u}_{Y_{\tau,\ell_x,\ell_y}}
     \end{aligned}
    \end{equation}
where the constant $C$ is independent of $u,v$. If $\tau(t)$ decreases fast enough such that
    \begin{equation}\label{3.4}
    \begin{aligned}
     {\tau}^\prime(t)+C\tau(t)\left(\norm{\bf u}_{L^\infty}+\abs{\bf u}_{1,\ell_x,\ell_y,\infty}\right)&+C(\tau(t)^2+\tau(t)^4)\norm{\bf u}_{ H^m_{\ell_x,\ell_y}}\\
     &+C(\tau(t)^2+\tau(t)^3)\norm{\bf u}_{X_{\tau,\ell_x,\ell_y}}\leq 0
    \end{aligned}
    \end{equation}
Then (\ref{3.3}) implies
    \begin{equation*}
     \frac{d}{ dt}\norm{{\bf u}(t)}_{X_{\tau(t),\ell_x,\ell_y}}\leq C(1+\tau(0)^3)\norm{\bf u}^2_{H^m_{\ell_x,\ell_y}}
    \end{equation*}
Therefore
    \begin{equation}\label{priori}
    \begin{aligned}
     \norm{{\bf u}(t)}_{X_{\tau(t),\ell_x,\ell_y}}\leq \norm{{\bf u}_0}_{X_{\tau_0,\ell_x,\ell_y}}+&C_{\tau(0)}\int_0^t \norm{{\bf u}(s,\cdot)}^2_{ H^m_{\ell_x,\ell_y}}ds
     \end{aligned}
    \end{equation}
for all $0<t<\infty$, where $C_{\tau(0)}=C\left(1+\tau(0)^3\right)$. As $\tau(t)$ is chosen to be a decrease function, a sufficient condition for (\ref{3.4}) to hold is that
    \begin{equation}\label{3.5}
    \begin{aligned}
     \tau^\prime(t) &+C\left(\norm{\bf u}_{L^\infty}+\abs{{\bf u}(t)}_{1,\ell_x,\ell_y,\infty}\right)\tau(t)\\
     &+C\tau(t)^2\left[C_{\tau(0)}^\prime \norm{{\bf u}(t)}_{{ H}^m_{\ell_x,\ell_y}}
     +C_{\tau(0)}^{\prime\prime} M(t) \right]\leq 0,
    \end{aligned}
    \end{equation}
where $C_{\tau(0)}^\prime=\left(1+\tau(0)^2\right), C_{\tau(0)}^{\prime\prime}=1+\tau(0)$. Set
\[
M(t)=\norm{{\bf u}_0}_{X_{\tau_x,\ell_y}}+C_{\tau(0)}\int_0^t \norm{{\bf u}(s)}^2_{H^m_{\ell_x,\ell_y}}ds.
\]
and denote
    \begin{equation*}
     G(t)=\exp\left[C\int_0^t \left(\norm{\bf u(s)}_{L^\infty}+\abs{{\bf u}(s)}_{1,\ell_x,\ell_y,\infty}\right) ds \right].
    \end{equation*}
By Proposition \ref{prop1}  we can choose the constant $C>0$ is taken largely enough such that
\[\norm{{\bf u}(t)}^2_{{ H}^m_{\ell_x,\ell_y}}\leq\norm{{\bf u}_0}^2_{{ H}^m_{\ell_x,\ell_y}} G(t).\]
It then follows that (\ref{3.5}) is satisfied if we let
    \begin{equation*}
     \tau(t)=G(t)^{-1}\frac{1}{\frac{1}{\tau(0)}+C\int_0^t \left[C_{\tau(0)}^\prime \norm{{\bf u}(s)}_{{ H}^m_{\ell_x,\ell_y}}+C_{\tau(0)}^{\prime\prime}M(s)\right]G(s)^{-1}  ds }.
    \end{equation*}
With this decreasing function $\tau(t)$, we can conclude the a priori estimates that are used to prove of Theorem \ref{thsec}.

\section{the commutator estimate}\label{sec4}
In this section we will prove Lemma \ref{commutator}, the method here is similar with \cite{KV2} except for the parts involving  the weight function.
\begin{proof}[Proof of Lemma \ref{commutator}]
We first write the sum as
    \begin{equation*}
      \Ccal=\sum_{m=3}^\infty\sum_{j=1}^m \Ccal_{m,j},
    \end{equation*}
where we denote
    \begin{equation*}
    \begin{aligned}
      \Ccal_{m,j} &=\frac{\tau^{m-3}}{(m-3)!^s}\sum_{\abs{\alpha}=m}\sum_{\abs{\beta}=j,\beta\leq\alpha}{\alpha\choose\beta}
      \bigg(\norm{\comii{y}^{\ell_y}\dd^\beta {\bf u}\cdot\nabla\dd^{\alpha-\beta}u}_{L^2} \\
      &\quad+\norm{\comii{x}^{\ell_x}\dd^\beta {\bf u}\cdot\nabla\dd^{\alpha-\beta}v}_{L^2} \bigg).
    \end{aligned}
    \end{equation*}
Then we split the right side of the above inequality into seven terms according to the values of m and j, and prove the following estimates.

For small  $j$, we have
    \begin{equation*}
     \sum_{m=3}^\infty \Ccal_{m,1}\leq  C\abs{\bf u}_{1,\infty}\abs{\bf u}_{3,\ell_x,\ell_y}+C\tau\abs{\bf u}_{1,\ell_x,\ell_y,\infty} \norm{\bf u}_{Y_{\tau,\ell_x,\ell_y}}
    \end{equation*}
and
    \begin{equation*}
      \sum_{m=3}^\infty \Ccal_{m,2}\leq C\abs{\bf u}_{2,\infty}\abs{\bf u}_{2,\ell_x,\ell_y}+C\tau\abs{ \bf u}_{2,\infty}\abs{\bf u}_{3,\ell_x,\ell_y}+C\tau^2\abs{\bf u}_{2,\infty}\norm{\bf u}_{Y_{\tau,\ell_x,\ell_y}}.
    \end{equation*}
For intermediate $j$, we have
    \begin{equation*}
     \sum_{m=6}^\infty\sum_{j=3}^{[{m\over2}]}\Ccal_{m,j}\leq C\tau^2 \norm{\bf u}_{X_\tau}\norm{\bf u}_{Y_{\tau,\ell_x,\ell_y}}
    \end{equation*}
and
    \begin{equation}\label{4.1}
    \begin{aligned}
     \sum_{m=7}^\infty \sum_{j=[m/2]+1}^{m-3}\Ccal_{m,j} &\leq C(\tau^2+\tau^{5/2}+\tau^3) \norm{\bf u}_{X_\tau}\norm{\bf u}_{Y_{\tau,\ell_x,\ell_y}}
    \end{aligned}.
    \end{equation}
For higher j, we have
    \begin{equation*}
     \sum_{m=5}^\infty \Ccal_{m,m-2}\leq C\tau^2\abs{\bf u}_{3}\abs{\bf u }_{3,\ell_x,\ell_y,\infty}+C\tau^3\abs{\bf u}_{3,\ell_x,\ell_y,\infty}\norm{\bf u}_{Y_\tau},
    \end{equation*}
    \begin{equation*}
     \sum_{m=4}^\infty \Ccal_{m,m-1}\leq C\tau\abs{\bf u}_{3}\abs{\bf u}_{2,\ell_x,\ell_y,\infty}+C\tau^2\abs{\bf u}_{2,\ell_x,\ell_y,\infty}\norm{\bf u}_{Y_\tau}
    \end{equation*}
and
    \begin{equation*}
     \sum_{m=3}^\infty \Ccal_{m,m}\leq C\abs{\bf u}_{1,\ell_x,\ell_y,\infty}\abs{\bf u}_3+C\tau\abs{\bf u}_{1,\ell_x,\ell_y,\infty}\norm{\bf  u}_{Y_\tau}.
    \end{equation*}
    The proof of the above estimates is similar as in \cite{KV2} and we just point out the difference due to the weight function.  The main difference may be caused by the weight function is the estimation of (\ref{4.1}).
Note that for $[m/2]+1\leq j\leq m-3$, with H\"older inequality and [Proposition 3.8, Chapter 13,Section 3,\cite{MT}], one have
    \begin{equation}\label{4.2}
    \begin{aligned}
     \norm{\comii{y}^{\ell_y}\dd^\beta{\bf u}\cdot\nabla\dd^{\alpha-\beta}u}_{L^2} &\leq C\norm{\dd^\beta{\bf u}}_{L^2}\norm{\comii{y}^{\ell_y}\nabla\dd^{\alpha-\beta}u}_{L^2}^{1/2}\\
     &\quad\times\norm{D^2\left(\comii{y}^{\ell_y}\nabla\dd^{\alpha-\beta} u\right)}_{L^2}^{1/2},
    \end{aligned}
    \end{equation}
where we used the notation
    \[
     D^k u=\{\dd^\alpha u: \abs{\alpha}=k\},\quad\ \ \norm{D^k u}_{L^2}=\sum_{\abs{\alpha}=k}\norm{\dd^\alpha u}_{L^2}.
    \]
Note that by Leibniz formula
    \begin{equation}\label{4.3}
    \begin{aligned}
     \norm{D^2\left(\comii{y}^{\ell_y}\nabla\dd^{\alpha-\beta}u \right)}_{L^2} &\leq C\bigg(\norm{\comii{y}^{\ell_y}\nabla\dd^{\alpha-\beta}u}_{L^2}
     +\norm{\comii{y}^{\ell_y}D^1\nabla\dd^{\alpha-\beta}u}_{L^2}\\
     &\quad+\norm{\comii{y}^{\ell_y}D^2\nabla\dd^{\alpha-\beta}u}_{L^2} \bigg)
    \end{aligned}
    \end{equation}
where $C$ is a constant. And here we used the fact that,  observing $0\leq\ell_y\leq1$,
 \[
 \left|\dd_y^2\comii{y}^{\ell_y}\right|\leq C,\quad\ \ \left|\dd_y\comii{y}^{\ell_y}\right|\leq1, \quad 1\leq\comii{y}^{\ell_y},
 \]
for some constant $C$. And similar arguments also applied to $\norm{\comii{x}^{\ell_x}\dd^\beta{\bf u}\cdot\nabla\dd^{\alpha-\beta}v}_{L^2}$.

With (\ref{4.2}) and (\ref{4.3}) , we have
    \begin{equation*}
    \begin{aligned}
      \sum_{m=7}^\infty\sum_{j=[m/2]+1}^{m-3}\Ccal_{m,j}
      &\leq C\sum_{m=7}^\infty \sum_{j=[m/2]+1}^{m-3}\abs{\bf u}_j \abs{\bf u}_{m-j+1,\ell_x,\ell_y}^{1/2}\bigg(\abs{\bf u}_{m-j+1,\ell_x,\ell_y}^{1/2}\\
      &\quad+\abs{\bf u}_{m-j+2,\ell_x,\ell_y}^{1/2}+\abs{\bf u}_{m-j+3,\ell_x,\ell_y}^{1/2} \bigg){m\choose j}\frac{\tau^{m-3}}{(m-3)!^s}
    \end{aligned}
    \end{equation*}
And the estimation of the right side of the above inequality is similar as in \cite{KV2}. So  we omit the details here. The proof of Lemma 4.3 is complete.
\end{proof}

\section{the pressure estimate}\label{sec5}
It can be deduced from the Euler system (\ref{1.1})-(\ref{1.4}) that the pressure term $p$ satisfies
\begin{equation}\label{5.1}
\Delta p=h\quad\text{in}\ \RR^2_+,
\end{equation}
where $h=2(\dd_x u)\dd_y v-2(\dd_y u)\dd_x v$. Taking the values of \eqref{1.2} on $\dd \RR^2_+$ and using \eqref{1.4}, we have the following Neumann boundary condition
\begin{equation}\label{5.2}
\dd_y p|_{y=0}=0 \quad \text{on}\ \dd\RR^2_+ .
\end{equation}
In order to estimate $\norm{\comii{y}^{\ell_y}\dd_x\dd^\alpha p}_{L^2}$ and $\norm{\comii{x}^{\ell_x}\dd_y\dd^\alpha p}_{L^2}$,
we first consider the following Neumann problem, and here we hope to obtain a weighted $H^2$-regularity result.

\begin{lem}\label{lemma3}
Suppose $\phi$ is the smooth solution of the following equation with Neumann boundary condition, and $\psi\in {\rm C}^\infty$
   \begin{equation}\label{5.3}
    \left\{
    \begin{aligned}
      \Delta \phi &=\psi\quad \mbox{in}\quad \RR^2_+,\\
      \dd_y \phi\big|_{y=0} &=0 \quad \mbox{on}\ \dd\RR^2_+.
    \end{aligned}
    \right.
   \end{equation}
Then there exist a  constant $C$ such that for $\forall \alpha\in\NN^2_0$ with $\abs{\alpha}=2$
   \begin{align}
   \norm{\comii{y}^{\ell_y}\dd^\alpha\phi}_{L^2} &\leq C\norm{\comii{y}^{\ell_y}\psi}_{L^2}+C\norm{\dd_x \phi}_{L^2},\label{5.4}\\
   \norm{\comii{x}^{\ell_x}\dd^\alpha\phi}_{L^2} &\leq C\norm{\comii{x}^{\ell_x}\psi}_{L^2}+C\norm{\dd_y \phi}_{L^2},\label{5.5}\\
   \norm{\comii{y}^{\ell_y}\dd_y\dd^\alpha\phi}_{L^2} &\leq C\norm{\comii{y}^{\ell_y}\dd_y\psi}_{L^2}+C\norm{\psi}_{L^2},\label{5.6}\\
   \norm{\comii{x}^{\ell_x}\dd_y\dd^\alpha\phi}_{L^2} &\leq C\norm{\comii{x}^{\ell_x}\dd_y\psi}_{L^2}+C\norm{\psi}_{L^2}.\label{5.7}
   \end{align}
\end{lem}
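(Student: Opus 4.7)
My plan is to deduce all four estimates from the basic identity
\begin{equation*}
\norm{D^2\phi}_{L^2(\RR^2_+)}^2=\norm{\Delta\phi}_{L^2(\RR^2_+)}^2,
\end{equation*}
valid for solutions of \eqref{5.3} under suitable decay at infinity. This identity follows from $\int_{\RR^2_+}(\dd_x^2\phi)(\dd_y^2\phi)\,dxdy=\int_{\RR^2_+}(\dd_x\dd_y\phi)^2 dxdy$, which I would prove by one integration by parts in $y$---the boundary term $(\dd_x^2\phi)(\dd_y\phi)|_{y=0}$ vanishes by the Neumann condition---followed by one in $x$. The rest of the proof consists of weighted variants of this identity, carried out for $\phi$ itself to obtain \eqref{5.4}--\eqref{5.5}, and for $\varphi:=\dd_y\phi$ to obtain \eqref{5.6}--\eqref{5.7}.

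For \eqref{5.4} I would repeat the above calculation with $\comii{y}^{2\ell_y}$ inserted. The same sequence of integrations by parts now produces an extra term from differentiating the weight:
\begin{equation*}
\int\comii{y}^{2\ell_y}(\dd_x^2\phi)(\dd_y^2\phi)dxdy=\norm{\comii{y}^{\ell_y}\dd_x\dd_y\phi}_{L^2}^2-2\ell_y\int y\comii{y}^{2\ell_y-2}(\dd_x^2\phi)(\dd_y\phi)dxdy,
\end{equation*}
with every boundary contribution still killed by $\dd_y\phi|_{y=0}=0$. Expanding $\int\comii{y}^{2\ell_y}|\psi|^2$ then yields $\norm{\comii{y}^{\ell_y}D^2\phi}_{L^2}^2=\norm{\comii{y}^{\ell_y}\psi}_{L^2}^2+R$. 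The key step is to transfer the $\dd_x^2$ in $R$ onto $\dd_y\phi$ by two further integrations by parts (first in $x$, then in $y$, the latter harmless at $y=0$ because the weight carries a factor of $y$), which recasts $R$, up to a constant, in the form $\int\dd_y[y\comii{y}^{2\ell_y-2}](\dd_x\phi)^2 dxdy$. The hypothesis $0\leq\ell_y\leq 1$ makes $|\dd_y[y\comii{y}^{2\ell_y-2}]|\leq C$, hence $|R|\leq C\norm{\dd_x\phi}_{L^2}^2$, and \eqref{5.4} follows. Estimate \eqref{5.5} is the symmetric calculation with $\comii{x}^{2\ell_x}$; the analogous remainder is controlled by $C\norm{\dd_y\phi}_{L^2}^2$.

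For \eqref{5.6} and \eqref{5.7} I would differentiate \eqref{5.3} in $y$: $\varphi:=\dd_y\phi$ satisfies $\Delta\varphi=\dd_y\psi$ with the Dirichlet condition $\varphi|_{y=0}=0$. The unweighted identity of Step~1 still holds for $\varphi$, because the boundary term $(\dd_x^2\varphi)(\dd_y\varphi)|_{y=0}$ now vanishes for the opposite reason: $\varphi|_{y=0}=0$ forces $\dd_x^2\varphi|_{y=0}=0$. Running Step~2 for $\varphi$ in place of $\phi$ yields
\begin{equation*}
\norm{\comii{y}^{\ell_y}D^2\varphi}_{L^2}\leq C\norm{\comii{y}^{\ell_y}\dd_y\psi}_{L^2}+C\norm{\dd_x\varphi}_{L^2},
\end{equation*}
and the last term equals $\norm{\dd_x\dd_y\phi}_{L^2}\leq\norm{D^2\phi}_{L^2}\leq\norm{\psi}_{L^2}$ by Step~1, producing \eqref{5.6}. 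Estimate \eqref{5.7} is identical up to swapping the weights.

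The main obstacle I anticipate is choreographing the integrations by parts so that every boundary integral disappears (thanks to either $\dd_y\phi|_{y=0}=0$ or $\varphi|_{y=0}=0$) while the commutator terms generated by differentiating the weight collapse into a single controllable quantity. The order of operations---first IBP in the weighted variable, and only afterwards in the transverse one---is what converts the a priori uncontrolled remainder $(\dd_x^2\phi)(\dd_y\phi)$ into the squared form $(\dd_x\phi)^2$; the hypothesis $\ell_x,\ell_y\leq 1$ is exactly what keeps the weight derivatives uniformly bounded and hence makes the whole remainder absorbable.
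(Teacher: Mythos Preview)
Your proposal is correct and follows essentially the same energy method as the paper's proof: weighted integration by parts on the identity $\Delta\phi=\psi$ for \eqref{5.4}--\eqref{5.5}, then passage to $\varphi=\dd_y\phi$ with Dirichlet boundary condition for \eqref{5.6}--\eqref{5.7}, with the classical unweighted $H^2$ estimate closing the latter. The only difference is cosmetic: the paper multiplies \eqref{5.3} separately by $\comii{y}^{2\ell_y}\dd_{xx}\phi$ and $\comii{y}^{2\ell_y}\dd_{yy}\phi$ and controls the commutator $\langle(\dd_y\comii{y}^{\ell_y})\dd_x\phi,\comii{y}^{\ell_y}\dd_{xy}\phi\rangle$ by Cauchy--Schwarz with an $\epsilon$-absorption, whereas you expand $\norm{\comii{y}^{\ell_y}\Delta\phi}_{L^2}^2$ directly and perform one further integration by parts to convert the remainder into a pure $\norm{\dd_x\phi}_{L^2}^2$ term; both routes exploit the same fact that $0\le\ell_y\le1$ keeps the weight derivatives bounded.
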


\begin{proof}
The proof is similar with the classical $H^2$- regularity arguments. Due to the symmetry it suffices to prove (\ref{5.4}) and \eqref{5.6}, since (\ref{5.5}) and (\ref{5.7}) can be proved similarly.

The method is to use integration by parts. We first multiply the first equation of (\ref{5.3}) by $\comii{y}^{2\ell_y}\dd_{xx}\phi$ and integrate over $\RR^2_+$,
    $$
    \norm{\comii{y}^{\ell_y} \dd_{xx}\phi}_{L^2}^2+\int_{\RR^2_+}\comii{y}^{2\ell_y}\dd_{xx}\phi\dd_{yy}\phi dxdy=\left<\comii{y}^{\ell_y}\dd_{xx}\phi ,\comii{y}^{\ell_y}\psi\right>.
    $$
Integrating by parts with the second term, we have
    \begin{equation*}
    \begin{aligned}
    \norm{\comii{y}^{\ell_y} \dd_{xx}\phi}_{L^2}^2+\norm{\comii{y}^{\ell_y} \dd_{xy}\phi}_{L^2}^2 &=\left<\comii{y}^{\ell_y}\dd_{xx}\phi ,\comii{y}^{\ell_y}\psi\right>\\
    &\quad-2\left<(\dd_y\comii{y}^{\ell_y})\dd_x\phi,\comii{y}^{\ell_y}\dd_{xy}\phi\right>.
    \end{aligned}
    \end{equation*}
Using Cauchy-Schwarz inequality and noticing that $\left|\dd_y\comii{y}^{\ell_y}\right|\leq1$ for $0\leq \ell_y\leq1$, we can obtain, for some $0<\eps,\eps'<1$,
    \begin{equation*}
    \begin{aligned}
    \norm{\comii{y}^{\ell_y} \dd_{xx}\phi}_{L^2}^2+\norm{\comii{y}^{\ell_y} \dd_{xy}\phi}_{L^2}^2 &\leq C_\epsilon\norm{\comii{y}^{\ell_y} \psi}_{L^2}^2+\epsilon\norm{\comii{y}^{\ell_y}\dd_{xx}\phi}_{L^2}^2\\
    &\quad+\epsilon^\prime \norm{\comii{y}^{\ell_y}\dd_{xy}\phi}_{L^2}^2+C_{\epsilon^\prime}\norm{\dd_{x}\phi}_{L^2}^2,
    \end{aligned}
    \end{equation*}
and thus
    $$
     \norm{\comii{y}^{\ell_y} \dd_{xx}\phi}_{L^2}+\norm{\comii{y}^{\ell_y}\dd_{xy}\phi}_{L^2}\leq C\norm{\comii{y}^{\ell_y} \psi}_{L^2}+C\norm{\dd_{x}\phi}_{L^2}
    $$
for some constant $C>0$.  Now if we multiply $\comii{y}^{2\ell_y}\dd_{yy}\phi$ on both sides of \eqref{5.3} and do the procedure as above, we can obtain
    $$
    \norm{\comii{y}^{\ell_y} \dd_{yy}\phi}_{L^2}+\norm{\comii{y}^{\ell_y}\dd_{xy}\phi}_{L^2}\leq C\norm{\comii{y}^{\ell_y}\psi}_{L^2}+C\norm{\dd_{x}\phi}_{L^2}.
    $$
Then we have proven (\ref{5.4}). To prove (\ref{5.6}), we first apply $\dd_y$ on equation (\ref{5.3}) to get
    \begin{equation}\label{5.8}
    \left\{
    \begin{aligned}
      \Delta \dd_y\phi &=\dd_y \psi\quad \text{in}\ \RR^2_+,\\
      \dd_y\phi\big|_{y=0} &=0 \quad \mbox{on}\ \dd\RR^2_+.
    \end{aligned}
    \right.
   \end{equation}
Denote $\Phi\triangleq\dd_y \phi$, then we have
    \begin{equation}\label{Phi}
    \left\{
    \begin{aligned}
      \Delta \Phi &=\dd_y \psi\quad \text{in}\ \RR^2_+,\\
      \Phi\big|_{y=0} &=0 \quad \mbox{on}\ \dd\RR^2_+.
    \end{aligned}
    \right.
   \end{equation}
And with this Dirichlet boundary problem for $\Phi$, we multiply \eqref{Phi} by $\comii{y}^{2\ell_y}\dd_{xx}\Phi$ and integrate over $\RR^2_+$.
    \begin{equation}\label{Phi1}
    \norm{\comii{y}^{\ell_y}\dd_{xx}\Phi}_{L^2}^2+\int_{\RR^2_+}\dd_{yy}\Phi \comii{y}^{2\ell_y}\dd_{xx}\Phi dxdy =\int_{\RR^2_+}\dd_y \psi \comii{y}^{2\ell_y}\dd_{xx}\Phi dxdy
    \end{equation}
Since $\Phi$ vanish at infinity and $\Phi\big|_{y=0}$, then $\dd_y\Phi \comii{y}^{2\ell_y}\dd_{xx}\Phi\bigg|_{y=0}=0$. We can integrate by parts to obtain
    \begin{align*}
     \int_{\RR^2_+}\dd_{yy}\Phi \comii{y}^{2\ell_y}\dd_{xx}\Phi dxdy &=\int_{\RR^2_+} \dd_{xy}\Phi \dd_y\left(\comii{y}^{2\ell_y} \dd_x\Phi \right)dxdy\\
        &=\norm{\comii{y}^{\ell_y}\dd_{xy}\Phi}_{L^2}^2+2\int_{\RR^2_+}\comii{y}^{\ell_y}\dd_{xy}\Phi \bigg( \dd_y\comii{y}^{\ell_y}\dd_x\Phi\bigg)dxdy
    \end{align*}
Substituting the above equality into \eqref{Phi1} and using H\"older inequality on the right hand side, we then obtain
    $$
     \norm{\comii{y}^{\ell_y}\dd_{xx}\Phi}_{L^2}^2+\norm{\comii{y}^{\ell_y}\dd_{xy}\Phi}_{L^2}^2\leq C\norm{\comii{y}^{\ell_y}\dd_{y}\psi}_{L^2}^2+C^\prime\norm{\dd_{x}\Phi}_{L^2}^2
    $$
If we multiply \eqref{Phi} by $\comii{y}^{2\ell_y}\dd_{yy}\Phi$, then we can proceed as above to obtain
    $$
     \norm{\comii{y}^{\ell_y}\dd_{yy}\Phi}_{L^2}^2+\norm{\comii{y}^{\ell_y}\dd_{xy}\Phi}_{L^2}^2\leq C\norm{\comii{y}^{\ell_y}\dd_{y}\psi}_{L^2}^2+C^\prime\norm{\dd_{x}\Phi}_{L^2}^2
    $$
With the use of the classical $H^2$ regularity result, we have
    $$
     \norm{\dd_x\Phi}_{L^2}=\norm{\dd_{xy}\phi}_{L^2}\leq C\norm{\psi}_{L^2}
    $$
Combining the above three equalities, we can prove \eqref{5.6}. \eqref{5.5} and \eqref{5.7} can be proved similarly.
\end{proof}

\begin{remark}
The terms of order one on right side of (\ref{5.4})-(\ref{5.5}) are created by differentiating on the weight functions $\comii{x}^{\ell_x}$ and $\comii{y}^{\ell_y}$ when integrating by parts. And this is the main reason why we need the constants $\ell_x,\ell_y$ to be in the interval $[0,1]$.
\end{remark}

For higher order regularity estimates, we need the following lemma.

\begin{lem}\label{lemma4}
Suppose $g$ is a smooth solution of
   \begin{equation}\label{5.9}
   \left\{
   \begin{aligned}
   \Delta g &=f \quad in\ \RR^2_+,\\
   \dd_y g\big|_{y=0} &=0 \quad \mbox{on}\ \dd\RR^2_+,
   \end{aligned}
   \right.
   \end{equation}
with $f\in C^\infty$.  Then there exist a  universal constant $C>0$ such that the following estiamtes
   \begin{equation}\label{5.10}
    \norm{\comii{y}^{\ell_y}\dd_x\dd^\alpha g}_{L^2}\leq C\sum_{\substack{l\in\NN_0,\abs{\beta}=m-1\\ \beta^\prime-\alpha^\prime=2l+1}}\norm{\comii{y}^{\ell_y}\dd^\beta f}_{L^2}+C\norm{\dd_x^{m-2}f}_{L^2}
   \end{equation}
   and
   \begin{equation}\label{5.11}
    \norm{\comii{x}^{\ell_x}\dd_y\dd^\alpha g}_{L^2}\leq C\sum_{\substack{l\in\NN_0,\abs{\beta}=m-1\\ \beta^\prime-\alpha^\prime=2l}}\norm{\comii{x}^{\ell_x}\dd^\beta f}_{L^2}+C\norm{\dd_x^{m-2}f}_{L^2}
   \end{equation}
hold for any $m\geq3$ and any multi-index $\alpha\in\NN^2_0$ such that $\abs{\alpha}=m$ .
\end{lem}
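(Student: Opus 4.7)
The plan is to prove both \eqref{5.10} and \eqref{5.11} by iterating the identity $\dd_y^2 g = f - \dd_x^2 g$, which is just a rewrite of \eqref{5.9}. Each application trades two $y$-derivatives for two $x$-derivatives at the cost of a lower-order $f$-term. The recursion terminates after at most $\lfloor \alpha_2/2\rfloor$ steps, leaving only a purely tangential (or almost-tangential) residual in $g$ which can be handled by Lemma \ref{lemma3}, together with a classical unweighted $H^2$-regularity bound applied to $\dd_x^{m-2} g$.

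Concretely, for \eqref{5.10}, write $\alpha=(\alpha_1,\alpha_2)$ and $q=\lfloor \alpha_2/2\rfloor$; iteration yields
\[
\dd_x\dd^\alpha g = (-1)^q \dd_x^{\alpha_1+2q+1}\dd_y^{\alpha_2-2q} g + \sum_{j=0}^{q-1}(-1)^j \dd_x^{\alpha_1+2j+1}\dd_y^{\alpha_2-2j-2} f.
\]
The $f$-contributions in the sum are exactly of the form $\dd^\beta f$ with $|\beta|=m-1$ and with the parity relation between $\beta$ and $\alpha$ prescribed in the sum in \eqref{5.10}, so after multiplication by $\comii{y}^{\ell_y}$ they are absorbed into the first term on the right of \eqref{5.10}. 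The residual $g$-term is either $\pm \dd_x^{m+1} g$ (when $\alpha_2$ is even) or $\pm \dd_x^m \dd_y g$ (when $\alpha_2$ is odd); in either case I would apply estimate \eqref{5.4} of Lemma \ref{lemma3} to $\dd_x^{m-1} g$, which solves the same Neumann problem with source $\dd_x^{m-1} f$ because $\dd_x$ is tangential and preserves the boundary condition. Choosing $\gamma=(2,0)$ or $\gamma=(1,1)$ produces a bound by $C\norm{\comii{y}^{\ell_y}\dd_x^{m-1} f}_{L^2}+C\norm{\dd_x^m g}_{L^2}$, the first summand again belonging to the sum in \eqref{5.10}. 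Finally, the unweighted $\norm{\dd_x^m g}_{L^2}$ is dominated by $C\norm{\dd_x^{m-2} f}_{L^2}$ via the classical $H^2$-regularity of the Neumann problem applied to $\dd_x^{m-2} g$ (e.g.\ by even reflection across $y=0$ and Calder\'on--Zygmund), which reproduces the last term of \eqref{5.10}.

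The proof of \eqref{5.11} proceeds along the same lines, with the leading derivative now normal: setting $\Phi=\dd_y g$, the function $\Phi$ satisfies the Dirichlet problem $\Delta\Phi = \dd_y f$ with $\Phi|_{y=0}=0$, exactly the boundary reduction already exploited in the derivation of \eqref{5.6}. Applying the identical reduction to $\dd_y\dd^\alpha g = \dd^\alpha \Phi$ yields $f$-terms whose multi-indices now have the opposite parity, matching the constraint written in \eqref{5.11}, while the residual term is treated via the Dirichlet-type weighted $H^2$ estimate that underlies \eqref{5.6} and \eqref{5.7}. The main obstacle will be the combinatorial bookkeeping: verifying that every multi-index produced through the recursion genuinely falls inside the index set of the sums in \eqref{5.10} and \eqref{5.11}, and that every lower-order $g$-term arising from iterative invocations of Lemma \ref{lemma3} is cleanly absorbed into the single unweighted quantity $\norm{\dd_x^{m-2} f}_{L^2}$. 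This absorption works precisely because $\dd_x$ commutes with the Laplacian and with both weights $\comii{x}^{\ell_x}$ and $\comii{y}^{\ell_y}$, while preserving the Neumann and the Dirichlet boundary conditions alike.
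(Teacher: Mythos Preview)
Your proposal is correct and follows essentially the same route as the paper: iterate the identity $\dd_y^2 g=f-\dd_x^2 g$ (the paper writes this as the closed formulas \eqref{5.12}--\eqref{5.13}), collect the resulting $\dd^\beta f$ terms with $|\beta|=m-1$ and the stated parity, estimate the purely tangential residual in $g$ via Lemma~\ref{lemma3} applied to $\dd_x^{m-1}g$, and absorb the unweighted remainder through the classical $H^2$-regularity of the Neumann problem for $\dd_x^{m-2}g$. The only cosmetic difference is that for \eqref{5.11} the paper invokes \eqref{5.5}/\eqref{5.7} of Lemma~\ref{lemma3} directly rather than reintroducing the Dirichlet function $\Phi=\dd_y g$, and one small slip in your closing sentence---$\dd_x$ does \emph{not} commute with $\comii{x}^{\ell_x}$---is harmless since that commutation is never actually used in the argument.
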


  In (\ref{5.10}) and (\ref{5.11}) we have summation over the set
    $$
    \left\{\beta\in \NN_0^2:\ \abs{\beta}=m-1, \exists~ l\in\NN_0 \ \text{such that}\  \beta^\prime-\alpha^\prime=2l+1\right\}
    $$
    and
    $$
    \left\{\beta\in \NN_0^2:\ \abs{\beta}=m-1, \exists~ l\in\NN_0 \ \text{such that}\  \beta^\prime-\alpha^\prime=2l\right\}
    $$
and similar conventions are used throughout this section.

\begin{proof}
  First by \eqref{5.9},  we use the following induction equality from \cite{KV2}:
    \begin{equation}\label{5.12}
     \dd_y^{2k+2}g=(-1)^{k+1}\dd_x^{2k+2}g+\sum_{j=0}^k (-1)^{k-j}\dd_x^{2k-2j}\dd_y^{2j}f,
    \end{equation}
and   applying $\dd_y$ on the above equation gives
    \begin{equation}\label{5.13}
      \dd_y^{2k+3}g=(-1)^{k+1}\dd_x^{2k+2}\dd_y g+\sum_{j=0}^k (-1)^{k-j}\dd_x^{2k-2j}\dd_y^{2j+1}f.
    \end{equation}
Then for given $\alpha\in\NN_0^2$ with $\abs{\alpha}=m$, we discuss the situations as the value of $\alpha_2$ varies.

Case 1. If $\alpha_2=0$ then $\comii{y}^{\ell_y}\dd_x\dd^\alpha g=\comii{y}^{\ell_y}\dd_x^{m+1}g$ and $$\comii{x}^{\ell_x}\dd_y\dd^\alpha g=\comii{x}^{\ell_x}\dd_y\dd_x^m g$$ Letting $\phi=\dd_x^{m-1}g$ and applying Lemma 5.1, we obtain
    \begin{equation*}
    \begin{aligned}
    \norm{\comii{y}^{\ell_y}\dd_x\dd^\alpha g}_{L^2}&\leq C\norm{\comii{y}^{\ell_y}\dd_x^{m-1}f}_{L^2}+C\norm{\dd_x^m g}_{L^2}\\
    &\leq C\norm{\comii{y}^{\ell_y}\dd_x^{m-1}f}_{L^2}+C\norm{\dd_x^{m-2}f}_{L^2},
    \end{aligned}
    \end{equation*}
and
    \begin{equation*}
    \begin{aligned}
    \norm{\comii{x}^{\ell_x}\dd_y\dd^\alpha g}_{L^2}&\leq C\norm{\comii{x}^{\ell_x}\dd_x^{m-1}f}_{L^2}+C\norm{\dd_y\dd_x^{m-1} g}_{L^2}\\
    &\leq C\norm{\comii{x}^{\ell_x}\dd_x^{m-1}f}_{L^2}+C\norm{\dd_x^{m-2}f}_{L^2}.
    \end{aligned}
    \end{equation*}
In such case, Lemma \ref{lemma4} is proved.

Case 2. If $\alpha_2=1$ then $\comii{y}^{\ell_y}\dd_x\dd^\alpha g=\comii{y}^{\ell_y}\dd_x^{m}\dd_y g$ and $$\comii{x}^{\ell_x}\dd_y\dd^\alpha g=\comii{x}^{\ell_x}\dd_y^2\dd_x^{m-1} g$$ Letting $\phi=\dd_x^{m-1}g$, we can obtain the same result by Lemma \ref{lemma3} as above.\\

Case 3. If $\alpha_2=2k+2\geq2$, then by the induction (\ref{5.12}) we have
   \begin{equation}\label{5.14}
   \begin{aligned}
     \comii{y}^{\ell_y}\dd_x\dd^\alpha g&=\comii{y}^{\ell_y}\dd_y^{2k+2}\dd_x^{\alpha_1+1}g\\
     &=\comii{y}^{\ell_y}(-1)^{k+1}\dd_x^{2k+2}\dd_x^{\alpha_1+1}g+\comii{y}^{\ell_y}\sum_{j=0}^k (-1)^{k-j}\dd_x^{2k-2j}\dd_y^{2j}\dd_x^{\alpha_1+1}f.
   \end{aligned}\end{equation}
Letting $\phi=\dd_x^{2k}\dd_x^{\alpha_1+1}g$, we apply Lemma \ref{lemma3} to obtain
   \begin{equation}\label{5.15}
   \begin{aligned}
   \norm{\comii{y}^{\ell_y}\dd_x^{2k+2}\dd_x^{\alpha_1+1}g}_{L^2}&\leq C\norm{\comii{y}^{\ell_y}\dd_x^{2k}\dd_x^{\alpha_1+1}f}_{L^2}+C\norm{\dd_x^m g}_{L^2}\\
   &\leq C\norm{\comii{y}^{\ell_y}\dd_x^{2k}\dd_x^{\alpha_1+1}f}_{L^2}+C\norm{\dd_x^{m-2}f}_{L^2}.
   \end{aligned}
   \end{equation}
Substituting (\ref{5.15}) into (\ref{5.14}),  we have
   \begin{equation*}
   \begin{aligned}
   \norm{\comii{y}^{\ell_y}\dd_x\dd^\alpha g}_{L^2} &\leq C\sum_{j=0}^k \norm{\comii{y}^{\ell_y}\dd_x^{2k-2j}\dd_y^{2j}\dd_x^{\alpha_1+1}f}_{L^2}+C\norm{\dd_x^{m-2}f}_{L^2}\\
   &\leq C\sum_{\substack{l\in\NN_0,\abs{\beta}=m-1\\ \beta^\prime-\alpha^\prime=2l+1}}\norm{\comii{y}^{\ell_y}\dd^\beta f}_{L^2}+C\norm{\dd_x^{m-2}f}_{L^2}.
   \end{aligned}
   \end{equation*}
And similarly from the induction (\ref{5.13}) equality
    \begin{equation}\label{5.16}
    \begin{aligned}
      \comii{x}^{\ell_x}\dd_y\dd^\alpha g &=\comii{x}^{\ell_x}\dd_y^{2k+3}\dd_x^{\alpha_1}g\\ &=\comii{x}^{\ell_x}(-1)^{k+1}\dd_x^{2k+2}\dd_y \dd_x^{\alpha_1}g+\comii{x}^{\ell_x}\sum_{j=0}^k (-1)^{k-j}\dd_x^{2k-2j}\dd_y^{2j+1}\dd_x^{\alpha_1}f.
    \end{aligned}\end{equation}
Letting $\phi=\dd_x^{2k}\dd_x^{\alpha_1}g$,  we apply Lemma \ref{lemma3} to get
    \begin{equation}\label{5.17}
   \begin{aligned}
   \norm{\comii{x}^{\ell_x}\dd_x^{2k+2}\dd_y\dd_x^{\alpha_1}g}_{L^2}&\leq C\norm{\comii{x}^{\ell_x}\dd_x^{2k}\dd_y\dd_x^{\alpha_1}f}_{L^2}+C\norm{\dd_y^2\dd_x^{2k}\dd_x^{\alpha_1} g}_{L^2}\\
   &\leq C\norm{\comii{x}^{\ell_x}\dd_x^{2k}\dd_y\dd_x^{\alpha_1}f}_{L^2}+C\norm{\dd_x^{m-2}f}_{L^2}.
   \end{aligned}
   \end{equation}
Substituting (\ref{5.17}) into (\ref{5.16}) yields
   \begin{equation*}
   \begin{aligned}
   \norm{\comii{x}^{\ell_x}\dd_y\dd^\alpha g}_{L^2} &\leq C\sum_{j=0}^k \norm{\comii{x}^{\ell_x}\dd_x^{2k-2j}\dd_y^{2j+1}\dd_x^{\alpha_1}f}_{L^2}+C\norm{\dd_x^{m-2}f}_{L^2}\\
   &\leq C\sum_{\substack{l\in\NN_0,\abs{\beta}=m-1\\ \beta^\prime-\alpha^\prime=2l+1}}\norm{\comii{x}^{\ell_x}\dd^\beta f}_{L^2}+C\norm{\dd_x^{m-2}f}_{L^2}.
   \end{aligned}
   \end{equation*}
Thus in such case the lemma is also proved.

Case 4. If $\alpha_2=2k+3\geq3$, then by the induction  we have
   \begin{equation}\label{5.18}
   \begin{aligned}
     \comii{y}^{\ell_y}\dd_x\dd^\alpha g&=\comii{y}^{\ell_y}\dd_y^{2k+3}\dd_x^{\alpha_1+1}g\\
     &=\comii{y}^{\ell_y}(-1)^{k+1}\dd_x^{2k+2}\dd_y\dd_x^{\alpha_1+1}g\\
     &\quad+\comii{y}^{\ell_y}\sum_{j=0}^k (-1)^{k-j}\dd_x^{2k-2j}\dd_y^{2j+1}\dd_x^{\alpha_1+1}f.
   \end{aligned} \end{equation}
Letting $\phi=\dd_x^{2k}\dd_x^{\alpha_1+1}$, then applying Lemma \ref{lemma3}, we have
   \begin{equation*}
   \begin{aligned}
   \norm{\comii{y}^{\ell_y}\dd_x^{2k+2}\dd_y\dd_x^{\alpha_1+1}g}_{L^2}&\leq C\norm{\comii{y}^{\ell_y}\dd_y\dd_x^{2k}\dd_x^{\alpha_1+1}f}_{L^2}+C\norm{\dd_x\dd_y\dd_x^{2k+\alpha_1+1} g}_{L^2}\\
   &\leq C\norm{\comii{y}^{\ell_y}\dd_y\dd_x^{2k}\dd_x^{\alpha_1+1}f}_{L^2}+C\norm{\dd_x^{m-2}f}_{L^2}.
   \end{aligned}
   \end{equation*}
Thus substituting the above estimate into (\ref{5.18}) yields
   \begin{equation*}
   \begin{aligned}
   \norm{\comii{y}^{\ell_y}\dd_x\dd^\alpha g}_{L^2} &\leq C\sum_{j=0}^k \norm{\comii{y}^{\ell_y}\dd_x^{2k-2j}\dd_y^{2j+1}\dd_x^{\alpha_1+1}f}_{L^2}+C\norm{\dd_x^{m-2}f}_{L^2}\\
   &\leq C\sum_{\substack{l\in\NN_0,\abs{\beta}=m-1\\ \beta^\prime-\alpha^\prime=2l+1}}\norm{\comii{y}^{\ell_y}\dd^\beta f}_{L^2}+C\norm{\dd_x^{m-2}f}_{L^2}.
   \end{aligned}
   \end{equation*}
 On the other hand, observe
   \begin{equation}\label{5.19}
   \begin{aligned}
      \comii{x}^{\ell_x}\dd_y\dd^\alpha g &=\comii{x}^{\ell_x}\dd_y^{2(k+1)+2}\dd^{\alpha_1}g\\
      &=\comii{x}^{\ell_x}(-1)^{k+2}\dd_x^{2k+4} \dd_x^{\alpha_1}g+\comii{x}^{\ell_x}\sum_{j=0}^{k+1} (-1)^{k+1-j}\dd_x^{2k+2-2j}\dd_y^{2j}\dd_x^{\alpha_1}f.
\end{aligned}    \end{equation}
Then, letting $\phi=\dd_x^{2k+2}\dd_x^{\alpha_1}g$ and applying Lemma \ref{lemma3}, we obtain
   \begin{equation*}
   \begin{aligned}
   \norm{\comii{x}^{\ell_x}\dd_x^{2k+4}\dd_x^{\alpha_1}g}_{L^2} &\leq C\norm{\comii{x}^{\ell_x}\dd_x^{2k+2}\dd_x^{\alpha_1}f}_{L^2}+C\norm{\dd_y \dd_x^{2k+2+\alpha_1}g}_{L^2}\\
   &\leq C\norm{\comii{x}^{\ell_x}\dd_x^{2k+2}\dd_x^{\alpha_1}f}_{L^2}+C\norm{\dd_x^{m-2}f}_{L^2},
   \end{aligned}
   \end{equation*}
which along with (\ref{5.19}) yields
   \begin{equation*}
   \begin{aligned}
   \norm{\comii{x}^{\ell_x}\dd_y\dd^\alpha g}_{L^2} &\leq C\sum_{j=0}^{k+1} \norm{\comii{x}^{\ell_x}\dd_x^{2k+2-2j}\dd_y^{2j}\dd_x^{\alpha_1}f}_{L^2}+C\norm{\dd_x^{m-2}f}_{L^2}\\
   &\leq C\sum_{\substack{l\in\NN_0,\abs{\beta}=m-1\\ \beta^\prime-\alpha^\prime=2l+1}}\norm{\comii{x}^{\ell_x}\dd^\beta f}_{L^2}+C\norm{\dd_x^{m-2}f}_{L^2}.
   \end{aligned}
   \end{equation*}
So in this case Lemma \ref{lemma4} is also proved.   Thus for all $\alpha$ such that $\abs{\alpha}=m$ we have proved Lemma \ref{lemma4}.
\end{proof}

Now we come to the proof of Lemma \ref{pressure}.

\begin{proof}[Proof of Lemma \ref{pressure}]
Apply Lemma \ref{lemma4} with equation (\ref{5.1})-(\ref{5.2}) we have
   \begin{equation*}
   \begin{aligned}
    \Pcal &=\sum_{m=3}^\infty \frac{\tau^{m-3}}{(m-3)!^s}\sum_{\abs{\alpha}=m}\bigg(\norm{\comii{y}^{\ell_y}\dd_x\dd^\alpha p}_{L^2}+\norm{\comii{x}^{\ell_x}\dd_y\dd^\alpha p}_{L^2} \bigg)\\
    &\leq C\sum_{m=3}^\infty \frac{\tau^{m-3}}{(m-3)!^s}\sum_{\abs{\alpha}=m}\bigg(\sum_{\substack{l\in\NN_0,\abs{\beta}=m-1\\ \beta^\prime-\alpha^\prime=2l+1}}\norm{\comii{y}^{\ell_y}\dd^\beta h}_{L^2}\\
    &\quad+\sum_{\substack{l\in\NN_0,\abs{\beta}=m-1\\ \beta^\prime-\alpha^\prime=2l}}\norm{\comii{x}^{\ell_x}\dd^\beta h}_{L^2}+\norm{\dd_x^{m-2}h}_{L^2} \bigg)\\
    &\leq C\sum_{m=3}^\infty \frac{m\tau^{m-3}}{(m-3)!^s}\bigg(\sum_{\abs{\beta}=m-1}\norm{\comii{y}^{\ell_y}\dd^\beta h}_{L^2}\\
    &\quad+\sum_{\abs{\beta}=m-1}\norm{\comii{x}^{\ell_x}\dd^\beta h}_{L^2}
    +\norm{\dd_x^{m-2}h}_{L^2} \bigg),
   \end{aligned}
   \end{equation*}
If we exchange the order of the summation, we can obtain, by direct verification,
   \[
   \sum_{\abs{\alpha}=m}\sum_{\substack{l\in\NN_0,\abs{\beta}=m-1\\ \beta^\prime-\alpha^\prime=2l+1}}\norm{\comii{y}^{\ell_y}\dd^\beta h}_{L^2}\leq Cm\sum_{\abs{\beta}=m-1}\norm{\comii{y}^{\ell_y}\dd^\beta h}_{L^2}
   \]
   and
   \[
   \sum_{\abs{\alpha}=m}\sum_{\substack{l\in\NN_0,\abs{\beta}=m-1\\ \beta^\prime-\alpha^\prime=2l}}\norm{\comii{x}^{\ell_x}\dd^\beta h}_{L^2}\leq Cm\sum_{\abs{\beta}=m-1}\norm{\comii{x}^{\ell_x}\dd^\beta h}_{L^2}.
   \]
And direct computation also gives
   \[
   \sum_{\abs{\alpha}=m}\norm{\dd_x^{m-2}h}_{L^2}\leq m\norm{\dd_x^{m-2}h}_{L^2}.
   \]
Since $h=2(\dd_x u)\dd_y v-2(\dd_y u)\dd_x v$, then we have, for arbitrary $\beta\in\NN^2_0$
   \[
   \norm{\comii{y}^{\ell_y}\dd^\beta h}_{L^2}\leq C\sum_{\gamma\leq\beta}{\beta\choose\gamma}\norm{\comii{y}^{\ell_y}\left|\dd^\gamma\nabla{ u}\right|\left|\dd^{\beta-\gamma}{\nabla v} \right|}_{L^2},
   \]
   and
   \[
   \norm{\comii{x}^{\ell_x}\dd^\beta h}_{L^2}\leq C\sum_{\gamma\leq\beta}{\beta\choose\gamma}\norm{\comii{x}^{\ell_x}\left|\dd^\gamma\nabla{ u}\right|\left|\dd^{\beta-\gamma}{\nabla v} \right|}_{L^2}.
   \]
So with these inequalities, we have
   \begin{equation*}
   \begin{aligned}
   \Pcal_{w}&=C\sum_{m=3}^{\infty}\frac{m\tau^{m-3}}{(m-3)!^s}\sum_{\abs{\beta}=m-1}
   \bigg(\norm{\comii{y}^{\ell_y}\dd^\beta h}_{L^2}+\norm{\comii{x}^{\ell_x}\dd^\beta h}_{L^2}\bigg)\\
   &\leq C\sum_{m=3}^{\infty}\frac{m\tau^{m-3}}{(m-3)!^s}\sum_{\stackrel{ \abs{\beta}=m-1} {0\leq\gamma\leq\beta} } {\beta\choose\gamma}\bigg(\norm{\comii{y}^{\ell_y}\left|\dd^\gamma\nabla u\right|\left|\dd^{\beta-\gamma}\nabla v\right|}_{L^2}\\
   &\quad+\norm{\comii{x}^{\ell_x}\left|\dd^\gamma\nabla u\right|\left|\dd^{\beta-\gamma}\nabla v\right|}_{L^2}\bigg)
   \end{aligned}
   \end{equation*}
and
   \begin{equation*}
   \begin{aligned}
   \Pcal_x&=C\sum_{m=3}^{\infty}\frac{m\tau^{m-3}}{(m-3)!^s}\norm{\dd_x^{m-2} h}_{L^2}\\
   &\leq C\sum_{m=3}^{\infty}\frac{m\tau^{m-3}}{(m-3)!^s}\sum_{0\leq j\leq m-2}{m-2\choose j}\norm{\left|\dd_x^j\nabla u\right|\left| \dd_x^{m-2-j}\nabla v\right|}_{L^2}.
   \end{aligned}
   \end{equation*}
Then we have
   \[
    \Pcal\leq \Pcal_w+\Pcal_x.
   \]
The rest part is to estimate $\Pcal_w$ and $\Pcal_x$.

We first estimate $\Pcal_w$.  To do so we split the summation into
   \begin{equation}\label{5.20}
   \Pcal_{w}\leq C\sum_{m=3}^\infty\sum_{j=0}^{m-1}\Pcal_{w,m,j},
   \end{equation}
where
   \begin{equation*}
   \begin{aligned}
   \Pcal_{w,m,j}&=\frac{m\tau^{m-3}}{(m-3)!^s}\sum_{\abs{\beta}=m-1}\sum_{\abs{\gamma}=j}{m-1\choose j}\bigg(\norm{\comii{y}^{\ell_y}\left|\dd^\gamma\nabla u\right|\left|\dd^{\beta-\gamma}\nabla v\right|}_{L^2}\\
   &\quad+\norm{\comii{x}^{\ell_x}\left|\dd^\gamma\nabla u\right|\left|\dd^{\beta-\gamma}\nabla v\right|}_{L^2}  \bigg)
   \end{aligned}
   \end{equation*}
Moreover
we split the right side of (\ref{5.20}) into seven terms according to the values of $m$ and $j$. For lower $j$, we have
   \begin{equation*}
    \sum_{m=3}^\infty \Pcal_{w,m,0} \leq C\abs{\bf u}_{1,\ell_x,\ell_y,\infty}\abs{\bf u}_{3,\ell_x,\ell_y}+C\tau\abs{\bf u}_{1,\ell_x,\ell_y,\infty}\norm{\bf u}_{Y_{\tau,\ell_x,\ell_y}},
   \end{equation*}
   \begin{equation*}
   \begin{aligned}
    \sum_{m=3}^\infty \Pcal_{w,m,1} &\leq C\abs{\bf u}_{2,\ell_x,\ell_y,\infty}\abs{\bf  u}_{2,\ell_x,\ell_y}+C\tau\abs{\bf u}_{2,\ell_x,\ell_y,\infty}\abs{\bf  u}_{3,\ell_x,\ell_y}\\
    &\quad+C\tau^2\abs{\bf u}_{2,\ell_x,\ell_y,\infty}\norm{\bf u}_{Y_{\tau,\ell_x,\ell_y}}
   \end{aligned}
   \end{equation*}
   and
   \begin{equation*}
   \sum_{m=5}^\infty \Pcal_{w,m,2} \leq C\tau^2\abs{\bf u}_{3,\ell_x,\ell_y,\infty}\abs{\bf  u}_{3,\ell_x,\ell_y}+C\tau^3\abs{\bf u}_{3,\ell_x,\ell_y,\infty}\norm{\bf u}_{Y_{\tau,\ell_x,\ell_y}}.
   \end{equation*}
For intermediate $j$, we have
   \begin{equation*}
    \sum_{m=8}^\infty \sum_{j=3}^{[m/2]-1}\Pcal_{w,m,j} \leq C(\tau^2+\tau^{5/2}+\tau^3)\norm{\bf u}_{X_{\tau,\ell_x,\ell_y}}\norm{\bf u}_{Y_{\tau,\ell_x,\ell_y}},
   \end{equation*}
   and
   \begin{equation*}
   \begin{aligned}
    \sum_{m=6}^\infty \sum_{j=[m/2]}^{m-3}\Pcal_{w,m,j}\leq C(\tau^2+\tau^{5/2}+\tau^3)\norm{\bf u}_{X_{\tau,\ell_x,\ell_y}}\norm{\bf u}_{Y_{\tau,\ell_x,\ell_y}}.
   \end{aligned}
   \end{equation*}
For higher $j$, we have
   \begin{equation*}
     \sum_{m=4}^\infty \Pcal_{w,m,m-2}\leq C\tau \abs{\bf  u}_{2,\ell_x,\ell_y,\infty}\abs{\bf u}_{3,\ell_x,\ell_y}+C\tau^2\abs{\bf  u}_{2,\ell_x,\ell_y,\infty}\norm{\bf u}_{Y_{\tau,\ell_x,\ell_y}}
   \end{equation*}
   and
   \begin{equation*}
    \sum_{m=3}^\infty \Pcal_{w,m,m-1}\leq C\abs{\bf u}_{1,\ell_x,\ell_y,\infty}\abs{\bf u}_{3,\ell_x,\ell_y}+C\tau\abs{\bf u}_{1,\ell_x,\ell_y,\infty}\norm{\bf u}_{Y_{\tau,\ell_x,\ell_y}}.
   \end{equation*}

In these estimations we used the fact that for vector function ${\bf u}=(u,v)$, the norm of $u$ or $v$ can be bounded by the norm of ${\bf u}$, for example
 \[
   \norm{\comii{y}^{\ell_y}\nabla\dd^\gamma u}_{L^\infty}\leq \abs{\bf u}_{\abs{\gamma}+1,\ell_x,\ell_y,\infty}.
 \]
With this consideration the estimations can be proved similarly by the method of \cite{KV2} and the arguments of the commutator estimates, and we omit the details.

To estimate $\Pcal_{x}$, we proceed as above, and write
    \begin{equation*}
     \Pcal_x\leq C\sum_{m=3}^\infty\sum_{j=0}^{m-2} \Pcal_{x,m,j},
    \end{equation*}
where
    \begin{equation*}
    \Pcal_{x,m,j}=\frac{m\tau^{m-3}}{(m-3)!^s}{{m-2}\choose j} \norm{\left|\dd_x^j\nabla u\right|\left|\dd_x^{m-j-2}\nabla v\right|}_{L^2}.
    \end{equation*}
For lower $j$, we have
    \begin{align*}
    & \sum_{m=3}^\infty \Pcal_{x,m,0}\leq C\abs{\bf u}_{1,\infty}\abs{\bf u}_2+C\tau\abs{\bf u}_{1,\infty}\abs{ \bf u}_3+C\tau^2\abs{\bf u}_{1,\infty}\norm{\bf u}_{Y_\tau},\\
    & \sum_{m=4}^\infty \Pcal_{x,m,1}\leq C\tau\abs{\bf u}_{2,\infty}\abs{\bf u}_2+C\tau^2\abs{\bf u}_{2,\infty}\abs{\bf  u}_3+C\tau^3\abs{\bf u}_{2,\infty}\norm{\bf u}_{Y_\tau},\\
    & \sum_{m=6}^\infty \Pcal_{x,m,2}\leq C\tau^3\abs{\bf u}_{3,\infty}\abs{\bf u}_3+C\tau^4\abs{\bf u}_{3,\infty}\norm{\bf u}_{Y_\tau}.
    \end{align*}
For mediate $j$, we have
    \begin{align*}
    & \sum_{m=8}^\infty\sum_{j=3}^{[m/2]-1} \Pcal_{x,m,j}\leq C\tau^3\norm{\bf u}_{X_\tau}\norm{\bf u}_{Y_\tau}\\
    & \sum_{m=6}^\infty\sum_{j=[m/2]}^{m-3} \Pcal_{x,m,j}\leq C\tau^3\norm{\bf u}_{X_\tau}\norm{\bf u}_{Y_\tau}.
    \end{align*}
Finally for higher $j$, we have
    \begin{align*}
       \sum_{m=5}^\infty \Pcal_{x,m,m-2}\leq C\tau^2\abs{\bf  u}_{1,\infty}\norm{\bf u}_{Y_\tau}.
    \end{align*}
These estimations can be proved similarly as \cite{KV2} with the fact that $\norm{\bf u}_{X_\tau}\leq\norm{\bf u}_{X_{\tau,\ell_x,\ell_y}}$, $\norm{\bf u}_{Y_\tau}\leq\norm{\bf u}_{Y_{\tau,\ell_x,\ell_y}}$ and $\norm{\bf u}_{H^m}\leq\norm{\bf u}_{H^m_{\ell_x,\ell_y}}$. With all these estimations, we can complete the proof Lemma \ref{pressure}.
\end{proof}

{\bf Acknowledgments.}  W. Li would like to appreciate the support from NSF of China (No. 11422106), and  C.-J. Xu was partially supported by ``the Fundamental Research Funds
for the Central Universities'' and the NSF of China (No. 11171261).

\vspace{1cm}

\end{document}